\def\R{{\mathbb {R}}}
\def\N{{\mathbb {N}}}
\newcommand{\ve}{\varepsilon}
\def\F{{\mathcal {F}}}
\def\A{{\mathcal {A}}}
\def\eps{{\varepsilon}}
\newcommand{\J}{\mathcal{J}}
\newcommand{\p}{\partial}
\newcommand{\PP}{\mathcal{P}}
\newcommand{\HH}{\mathcal{H}}
\newtheorem{teo}{Theorem}[section]
\newtheorem{lema}[teo]{Lemma}
\newtheorem{prop}[teo]{Proposition}
\newtheorem{corol}[teo]{Corollary}
\theoremstyle{remark}
\theoremstyle{definition}
\newtheorem{defi}[teo]{Definition}
\numberwithin{equation}{section}
\begin{document}

\title[Existence of solution]{Existence of solution to a critical trace equation with variable exponent}
\author[J. Fern\'andez Bonder, N. Saintier and A. Silva]{Juli\'an Fern\'andez Bonder, Nicolas Saintier and Anal\'ia Silva}

\address[J. Fern\'andez Bonder and A. Silva]{IMAS - CONICET and Departamento de Matem\'atica, FCEyN - Universidad de Buenos Aires, Ciudad Universitaria, Pabell\'on I  (1428) Buenos Aires, Argentina.}

\address[N. Saintier]{Instituto de Ciencias, Universidad Nacional de General Sarmiento, Juan María Gutierrez 1150 Los Polvorines - Pcia de Bs. As. - Argentina and Departamento de Matem\'atica, FCEyN - Universidad de Buenos Aires, Ciudad Universitaria, Pabell\'on I  (1428) Buenos Aires, Argentina.}

\email[J. Fernandez Bonder]{jfbonder@dm.uba.ar}

\urladdr[J. Fernandez Bonder]{http://mate.dm.uba.ar/~jfbonder}

\email[A. Silva]{asilva@dm.uba.ar}

\email[N. Saintier]{nsaintie@dm.uba.ar, nsaintie@ungs.edu.ar}

\urladdr[N. Saintier]{http://mate.dm.uba.ar/~nsaintie}


\subjclass[2010]{35J92,35B33}

\keywords{Sobolev embedding, variable exponents, critical exponents, concentration compactness}

\begin{abstract}
In this paper we study sufficient local conditions for the existence of non-trivial solution to a critical equation for the $p(x)-$Laplacian where the critical term is placed as a source through the boundary of the domain. The proof relies on a suitable generalization of the concentration--compactness principle for the trace embedding for variable exponent Sobolev spaces and the classical mountain pass theorem.
\end{abstract}

\maketitle
\section{introduction}
Let $\Omega\subset\R^N$ be a smooth bounded open set. The purpose of this article is the study of the existence of a nontrivial solution to the critical trace equation
\begin{equation}\label{1.1}
\begin{cases}
 -\Delta_{p(x)}u + h|u|^{p(x)-2}u  =0 &\quad \text{in } \Omega, \\
 |\nabla u|^{p(x)-2}\partial_\nu u=  |u|^{r(x)-2}u &\quad \text{on } \partial \Omega,
\end{cases}
\end{equation}
where $\Delta_{p(x)}u=-\text{div}(|\nabla u|^{p(x)-2}\nabla u)$ is the $p(x)$-Laplacian corresponding to some given function $p\colon\bar\Omega\to (1,+\infty)$  (notice that when $p$ is constant we recover the usual $p$-Laplacian),  
 $\p_\nu$ is the outer normal derivative, and $h$ is a smooth function satisfying some coercivity assumption (see the definition of the norm in (\ref{coercive}) below). The exponents $p\colon\bar\Omega\to (1,+\infty)$ and $r\colon \p\Omega\to [1,+\infty)$ are continuous functions that verify
\begin{equation}\label{pr}
1<p^-:=\inf_{x\in \Omega}p(x)\le p^+:=\sup_{x\in \Omega}p(x)<N\quad \text{and}\quad r(x)\le p_*(x) = \frac{(N-1)p(x)}{N-p(x)}.
\end{equation}
The exponent $p_*$ is critical from the point of view of the Sobolev trace emebdding   $W^{1,p(x)}(\Omega)\hookrightarrow L^{r(x)}(\p\Omega)$ (see Theorems \ref{trace} and \ref{Compact_teo} in section 2 below for a precise statement). 

We focus in this paper on the critical problem for \eqref{1.1} in the sense that we will assume from now on that 
\begin{equation}\label{r.critic}
\A_T:=\{x\in\p\Omega\colon r(x)=p_*(x)\}\neq\emptyset.
\end{equation}
Under this assumption the embedding $W^{1,p(x)}(\Omega)\hookrightarrow L^{r(x)}(\p\Omega)$ is generally not compact so that the existence of a non-trivial solution to \eqref{1.1} is a non-trivial problem. Our main purpose is to find conditions on $p$, $r$ and $\Omega$ in the spirit of \cite{Adi}, \cite{Escobar}, and \cite{FBS}, where this kind of problem has been considered in the constant exponent case, ensuring the  existence of a non-trivial solution to \eqref{1.1}. 

\medskip

Observe that problem \eqref{1.1} is variational in the sense that weak solutions are critical points of the associated functional
\begin{equation}\label{F}
\F(u):= \int_{\Omega} \frac{1}{p(x)}\left[|\nabla u|^{p(x)} + h |u|^{p(x)}\right]\, dx - \int_{\p\Omega} \frac{1}{r(x)}|u|^{r(x)}\, dS,
\end{equation}
where $dS$ denotes the boundary measure. This functional $\F$ is well defined in $W^{1,p(x)}(\Omega)$ thanks to \eqref{pr} (see  Theorems \ref{trace} in section 2 below). 
The main tool available in order to find critical points for $C^1$ functionals in Banach spaces is the well known Mountain Pass Theorem (MPT). The MPT has two types of hypotheses, geometrical and topological.

For the functional $\F$ it is fairly easy to see that when $p^+<r^-$ the geometrical hypotheses of the MPT are satisfied. The topological hypothese is the so-called Palais--Smale condition that requires for a sequence of approximate critical points to be precompact. When $r(x)$ is {\em uniformly subcritical}, i.e.
\begin{equation}\label{unif_subcrit}
 \inf_{x\in \p\Omega}(p_*(x)-r(x))>0, 
\end{equation} 
the immersion $W^{1,p(x)}(\Omega)\hookrightarrow L^{r(x)}(\p\Omega)$ is compact. It is then  straightforward to check that the Palais--Smale condition is satisfied for every energy level $c$.

Notice that there are some cases where the subcriticality is violated but still the immersion is compact.
In fact, in \cite{FBSS3} the authors find conditions on the exponents $p$ and $r$ such that $\A_T\neq\emptyset$ but the immersion remains compact. This type of conditions were first discovered in \cite{MOSS} where the embedding $W_0^{1,p(x)}(\Omega)\hookrightarrow L^{q(x)}(\Omega)$, $q(x)\le p^*(x):=Np(x)/(N-p(x))$ was analyzed. The result in \cite{FBSS3} shows that if the criticality set $\A_T$ is ``small'' and we have a control on how the exponent $r$ reaches $p_*$ at the criticality set, then the immersion $W^{1,p(x)}(\Omega)\hookrightarrow L^{r(x)}(\p\Omega)$ remains compact, and so the existence of solutions to \eqref{1.1} follows as in the subcritical case.

However, in the general case $\A_T\not=\emptyset$, the present paper is, up to our knowledge, the first work regarding the existence of solutions for \eqref{1.1}.

Recently, in \cite{FBSS3}, the authors analyzed the problem of the existence of extremals for the immersion $W^{1,p(x)}(\Omega)\hookrightarrow L^{r(x)}(\p\Omega)$, that is functions realizing the infimum in 
$$ 0<T(p(\cdot), r(\cdot), \Omega) 
 := \inf_{v\in W^{1,p(x)}(\Omega)}  \frac{\|v\|_{W^{1,p(x)}(\Omega)}}{\|v\|_{L^{r(x)}(\p\Omega)}}.
$$
In \cite{FBSS3} the main tool used to deal with the existence of extremals problem is the extension of the celebrated Concentration--Compactness Principle (CCP) of P.L. Lions to the variable exponent case. In the case of the immersion $W^{1,p(x)}_0(\Omega)\hookrightarrow L^{q(x)}(\Omega)$ this was done independently by \cite{FBS1} and \cite{Fu} (see also \cite{FBSS1} where a refinement of the result was obtained). For the trace immersion, this result was proved in the above mentioned paper \cite{FBSS3}.

\medskip

In order to state our main results we need to introduce some notation.
Given some nonempty, closed subset $\Gamma\subset \partial\Omega$ (possibly empty), we consider the space $W^{1,p(x)}_\Gamma(\Omega)$ defined by
$$ W^{1,p(x)}_\Gamma(\Omega):=
 \overline{\{u \in C^1(\bar\Omega)\colon u=0\text{ in a neighborhood of }\Gamma\}}, $$
the closure being taken in the $\|\cdot\|_{1,p(x)}-$norm. This is the space of functions vanishing on $\Gamma$. Observe that $W^{1,p(x)}_\emptyset(\Omega) = W^{1,p(x)}(\Omega)$ and, more generally, that  $W^{1,p(x)}_\Gamma(\Omega) = W^{1,p(x)}(\Omega)$ if and only if $\Gamma$ has $p(x)-$capacity zero. See \cite{Harjuleto}.
Given a critical point $x\in\A_T$, we define the localized best Sobolev trace constant $\bar T_x$ around $x$ by
\begin{equation}\label{LocBestCste}
 \bar T_x = \sup_{\eps>0} T(p(\cdot), r(\cdot), \Omega_\eps, \Gamma_\eps), 
\end{equation}
where
\begin{equation}\label{LocBestCste2}
T(p(\cdot), r(\cdot), \Omega_\eps, \Gamma_\eps)  = \inf_{v\in W^{1,p(x)}_{\Gamma_\eps}(\Omega_\eps)}\frac{\|v\|_{W^{1,p(x)}(\Omega_\eps)}}{\|v\|_{L^{r(x)}(\p\Omega_\eps)}} \quad \text{and}\quad \Omega_\eps = \Omega\cap B_\eps(x),\ \Gamma_\eps = \Omega\cap \partial B_\eps(x).
\end{equation}

Our first result states that the functional $\F$ defined in (\ref{F}) verifies the Palais--Smale condition for any energy level $c$ below a {\em critical energy level} $c^*$ given by 
$$
c^* := \inf_{x\in\A_T} \left(\frac{1}{p(x)} - \frac{1}{p_*(x)}\right) \bar T_{x}^{\frac{p(x)p_*(x)}{p_*(x)-p(x)}}. 
$$
As an immediate corollary of this result, we obtain appplying the MPT the existence of a solution to \eqref{1.1} provided there exists a function $v\in W^{1,p(x)}(\Omega)$ such that
\begin{equation}\label{Fc*}
\sup_{t>0} \F(tv) < c^*.
\end{equation}
The rest of the paper is devoted to find conditions on $p, r$ and $\Omega$ that allow us to construct a function $v$ that satisfies \eqref{Fc*}. The idea used in the construction of such $v$ is to rescale and truncate an extremal for the Sobolev trace immersion
$$ \bar K(N,p)^{-1} =  \inf_{f\in C^{\infty}_c(\R^N)}
 \frac{\displaystyle \left(\int_{\R^N_+} |\nabla f|^{p}\,dx\right)^{\frac{1}{p}}}
      {\displaystyle \left(\int_{\R^{N-1}} |f|^{p_*}\, dy\right)^{\frac{1}{p_*}}}. 
$$
These extremals were found by Nazaret in \cite{Nazaret} by means of mass transportation methods extending the well known result of Escobar in \cite{Escobar} where the case $p=2$ was studied. These extremals are of the form
\begin{equation}\label{Extremal}
V_{\lambda,y_0}(y,t) = \lambda^{-\frac{N-p}{p-1}}V\left(\tfrac{y-y_0}{\lambda}, \tfrac{t}{\lambda}\right), \qquad y\in \R^{N-1},\ t>0, 
\end{equation} 
where
\begin{equation}\label{ExtremalProfile}
V(y,t) = r^{-\frac{N-p}{p-1}},\qquad r=\sqrt{(1+t)^2 + |y|^2}.
\end{equation}

Similar ideas were used recently in \cite{FBSS3} were the existence problem for extremals in the critical Sobolev trace immersion was studied. These ideas were also previously used for \eqref{1.1} in the constant exponent case by Adimurthi-Yadava \cite{Adi}, Escobar \cite{Escobar}, and
Fernandez Bonder and Saintier in \cite{FBS}. Let us mentioned that these ideas are classical when dealing with critical equations. They go back to the seminal paper of Aubin \cite{Aubin} and 
Brezis and Nirenberg \cite{BN} and have been widely used since then in the constant exponent case (see e.g. \cite{Aubin3,DHL,D,DH,Escobar,ER,Faget,FBS, HV, Saintier2, Saintier3,Schoen} and references therein). In the variable setting we refer to the recent paper  \cite{FBSS2} where analogous results for the critical problem with Dirichlet boundary conditions have been obtained.

\subsection*{Organization of the paper} The rest of the paper is organized as follows. In Section 2, we collect some preliminaries on variable exponent spaces that will be used throughout the paper. In Section 3 we give an existence criteria for solutions, namely condition \eqref{Fc*}. In section 4 we  give  conditions that ensure the validity of such criteria. We leave for the Appendix some asymptotic expansions needed in the proof of our results.

\section{Preliminaries on variable exponent Sobolev spaces}

In this section we review some preliminary results regarding Lebesgue and Sobolev spaces with variable exponent. All of these results and a comprehensive study of these spaces can be found in \cite{libro}.

\medskip

We denote by $\PP(\Omega)$ the set of Lebesgue measurable functions $p\colon \Omega\to [1,\infty)$.
Given $p\in\PP(\Omega)$ we consider the variable exponent Lebesgue space $L^{p(x)}(\Omega)$ defined by 
$$ L^{p(x)}(\Omega) = \Big\{u\in L^1_{\text{loc}}(\Omega) \colon \int_\Omega|u(x)|^{p(x)}\,dx<\infty\Big\}. $$
This space is endowed with the (Luxembourg) norm
$$ \|u\|_{L^{p(x)}(\Omega)} = \|u\|_{p(x)} :=\inf\Big\{\lambda>0:\int_\Omega\Big|\frac{u(x)}{\lambda}\Big|^{p(x)}\,dx\leq 1\Big\}. $$

The following H\"older-type inequality is proved in \cite{Fan, KR} (see also \cite{libro}, pp. 79, Lemma 3.2.20 (3.2.23)):

\begin{prop}[H\"older-type inequality]\label{Holder}
Let $f\in L^{p(x)}(\Omega)$ and $g\in L^{q(x)}(\Omega)$. Then the following inequality holds
$$ \|fg\|_{L^{s(x)}(\Omega)}\le  \Big( \Big(\frac{s}{p}\Big)^+ + \Big(\frac{s}{q}\Big)^+\Big) \|f\|_{L^{p(x)}(\Omega)}\|g\|_{L^{q(x)}(\Omega)}, $$
where
$$ \frac{1}{s(x)} = \frac{1}{p(x)} + \frac{1}{q(x)}. $$
\end{prop}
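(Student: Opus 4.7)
The plan is a pointwise Young's inequality plus a careful passage from modular to norm. By the positive homogeneity of the Luxembourg norm we may assume $\|f\|_{L^{p(x)}(\Omega)} = \|g\|_{L^{q(x)}(\Omega)} = 1$, so that the modular inequalities $\int_\Omega |f|^{p(x)}\,dx \le 1$ and $\int_\Omega |g|^{q(x)}\,dx \le 1$ hold. For each $x\in\Omega$, the exponents $p(x)/s(x)$ and $q(x)/s(x)$ are conjugate since by definition of $s$ one has $\frac{s(x)}{p(x)} + \frac{s(x)}{q(x)} = 1$; applying Young's inequality pointwise to $|f(x)|^{s(x)}$ and $|g(x)|^{s(x)}$ therefore yields
$$|f(x)g(x)|^{s(x)} \le \frac{s(x)}{p(x)}|f(x)|^{p(x)} + \frac{s(x)}{q(x)}|g(x)|^{q(x)}.$$
Integrating over $\Omega$ and bounding $s/p$ and $s/q$ by their suprema gives the modular estimate
$$\int_\Omega |fg|^{s(x)}\,dx \le \left(\tfrac{s}{p}\right)^+ + \left(\tfrac{s}{q}\right)^+ =: C.$$

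To conclude, the task is to turn this modular bound into a norm bound. I would first observe that $C\ge 1$: at any fixed $x_0\in\Omega$ the identity $s(x_0)/p(x_0) + s(x_0)/q(x_0) = 1$ forces $(s/p)^+ + (s/q)^+ \ge 1$. Combined with $s(x)\ge 1$, this implies $C^{s(x)} \ge C$ for all $x$, hence
$$\int_\Omega \left|\frac{f(x)g(x)}{C}\right|^{s(x)}\,dx = \int_\Omega C^{-s(x)}|fg|^{s(x)}\,dx \le \frac{1}{C}\int_\Omega|fg|^{s(x)}\,dx \le 1.$$
By the very definition of the Luxembourg norm this gives $\|fg\|_{L^{s(x)}(\Omega)} \le C$ in the normalized case, and undoing the normalization by multiplying through by $\|f\|_{L^{p(x)}(\Omega)}\|g\|_{L^{q(x)}(\Omega)}$ delivers the stated inequality.

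The main subtlety I expect is precisely the modular-to-norm transition, since the rescaling constant $C$ appears inside the variable power $s(x)$ and one cannot simply factor it out as in the constant-exponent case; the fact that $C\ge 1$ together with $s\ge 1$ is exactly what makes the monotone estimate $C^{-s(x)}\le C^{-1}$ available, so pinning down this inequality is the one nontrivial step.
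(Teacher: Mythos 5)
Your proof is correct. Note that the paper does not actually prove this proposition; it cites it from the references (Fan, Kov\'a\v{c}ik--R\'akosn\'{\i}k, and the book of Diening et al., Lemma 3.2.20), so there is no in-paper argument to compare against. Your route---pointwise Young's inequality with the conjugate pair $p(x)/s(x)$, $q(x)/s(x)$ (conjugate precisely because $\tfrac{s}{p}+\tfrac{s}{q}\equiv 1$), integration to get the modular bound $\int_\Omega |fg|^{s(x)}\,dx\le C$ with $C=(s/p)^++(s/q)^+$, and then the modular-to-norm conversion---is the standard elementary derivation and it is carried out correctly.

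Two small remarks on the part you correctly flagged as the nontrivial step. First, the inequality $C\ge 1$ is indeed immediate from $\tfrac{s(x_0)}{p(x_0)}+\tfrac{s(x_0)}{q(x_0)}=1$ at any fixed $x_0$. Second, the bound $C^{-s(x)}\le C^{-1}$ genuinely requires $s(x)\ge 1$ a.e.; this is not automatic from $\tfrac{1}{s}=\tfrac{1}{p}+\tfrac{1}{q}$ (for instance $p=q=3/2$ gives $s=3/4$), but it is implicit in the statement since $s\in\PP(\Omega)$ means $s$ takes values in $[1,\infty)$, and otherwise $\|\cdot\|_{L^{s(x)}}$ would not be a norm. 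You should also dispatch the trivial case $\|f\|_{p(x)}=0$ or $\|g\|_{q(x)}=0$ before normalizing, and the implication $\|f\|_{p(x)}=1\Rightarrow\rho(f)\le 1$ should be read off from Proposition~\ref{norma.y.rho} (which, as stated, requires the exponent to be bounded; this holds throughout the paper). With these clarifications the argument is complete and self-contained.
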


The following proposition, also proved in \cite{KR}, will be most useful (see also \cite{libro}, Chapter 2, Section 1): 

\begin{prop}\label{norma.y.rho}
Set $\rho(u):=\int_\Omega|u(x)|^{p(x)}\,dx$. For $u\in L^{p(x)}(\Omega)$ and $\{u_k\}_{k\in\N}\subset L^{p(x)}(\Omega)$, we have
\begin{align}
& u\neq 0 \Rightarrow \Big(\|u\|_{L^{p(x)}(\Omega)} = \lambda \Leftrightarrow \rho(\frac{u}{\lambda})=1\Big).\\
& \|u\|_{L^{p(x)}(\Omega)}<1 (=1; >1) \Leftrightarrow \rho(u)<1(=1;>1).\\
& \|u\|_{L^{p(x)}(\Omega)}>1 \Rightarrow \|u\|^{p^-}_{L^{p(x)}(\Omega)} \leq \rho(u) \leq \|u\|^{p^+}_{L^{p(x)}(\Omega)}.\\
& \|u\|_{L^{p(x)}(\Omega)}<1 \Rightarrow \|u\|^{p^+}_{L^{p(x)}(\Omega)} \leq \rho(u) \leq \|u\|^{p^-}_{L^{p(x)}(\Omega)}.\\
& \lim_{k\to\infty}\|u_k\|_{L^{p(x)}(\Omega)} = 0 \Leftrightarrow \lim_{k\to\infty}\rho(u_k)=0.\\
& \lim_{k\to\infty}\|u_k\|_{L^{p(x)}(\Omega)} = \infty \Leftrightarrow \lim_{k\to\infty}\rho(u_k) = \infty.
\end{align}
\end{prop}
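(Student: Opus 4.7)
The plan is to reduce everything to the behavior of the function $\phi(\lambda) := \rho(u/\lambda)$ on $\lambda > 0$, for fixed $u \in L^{p(x)}(\Omega)$. One checks by dominated/monotone convergence that $\phi$ is everywhere finite, continuous, and (on $\{\phi > 0\}$) strictly decreasing, with $\phi(\lambda) \to 0$ as $\lambda \to \infty$ and, if $u \not\equiv 0$, $\phi(\lambda) \to \infty$ as $\lambda \to 0^+$. Since the Luxembourg norm is defined by $\|u\|_{L^{p(x)}(\Omega)} = \inf\{\lambda > 0 : \phi(\lambda) \le 1\}$, the intermediate value theorem immediately yields (2.1): for $u \neq 0$ there is a unique $\lambda^* > 0$ with $\phi(\lambda^*) = 1$, and $\{\phi \le 1\} = [\lambda^*, \infty)$, so the infimum is attained precisely at $\lambda^*$.

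Property (2.2) is then an immediate corollary of (2.1) plus monotonicity of $\phi$: $\|u\|_{L^{p(x)}} < 1 \iff \lambda^* < 1 \iff 1 = \phi(\lambda^*) > \phi(1) = \rho(u)$, and similarly for the $=1$ and $>1$ cases. For (2.3), set $\lambda := \|u\|_{L^{p(x)}} > 1$; then $\rho(u/\lambda) = 1$, and since $\lambda > 1$ one has $\lambda^{p^-} \le \lambda^{p(x)} \le \lambda^{p^+}$ pointwise. Multiplying by $|u/\lambda|^{p(x)}$ and integrating gives
$$\lambda^{p^-} \le \int_\Omega \lambda^{p(x)} \left|\frac{u}{\lambda}\right|^{p(x)} dx = \rho(u) \le \lambda^{p^+}.$$
Property (2.4) follows by the same argument, with the inequalities flipped because $\lambda < 1$ forces $\lambda^{p^+} \le \lambda^{p(x)} \le \lambda^{p^-}$.

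Finally, (2.5) and (2.6) are extracted from (2.3)--(2.4) by a standard squeeze. For (2.5): if $\|u_k\| \to 0$, eventually $\|u_k\| < 1$, so (2.4) gives $\rho(u_k) \le \|u_k\|^{p^-} \to 0$; conversely if $\rho(u_k) \to 0$ but $\|u_k\| \not\to 0$, some subsequence satisfies $\|u_{k_j}\| \ge \delta > 0$ for all $j$, and (2.3) or (2.4) supplies a positive lower bound for $\rho(u_{k_j})$, contradicting $\rho(u_{k_j}) \to 0$. Statement (2.6) is symmetric, applying (2.3) in the regime of large $\|u_k\|$. The only mildly delicate point in the whole argument is verifying $\phi(\lambda) \to \infty$ as $\lambda \to 0^+$ when $u \not\equiv 0$, which is a one-line monotone convergence argument on $\{|u| > 0\}$; everything else is bookkeeping.
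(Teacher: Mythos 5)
The paper does not prove Proposition \ref{norma.y.rho} itself; it cites Kov\'a\v{c}ik--R\'akosn\'ik and the book of Diening et al.\ and leaves the argument to those references. Your proof is correct and is essentially the standard textbook argument: you reduce everything to the function $\phi(\lambda)=\rho(u/\lambda)$, establish its continuity, strict monotonicity, and limits at $0^+$ and $\infty$ (using that $p^+<\infty$ to justify finiteness and the dominated-convergence steps), obtain (2.1)--(2.2) from the intermediate value theorem, derive (2.3)--(2.4) by sandwiching $\lambda^{p(x)}$ between $\lambda^{p^\pm}$ at $\lambda=\|u\|$, and deduce (2.5)--(2.6) by a subsequence argument from these, all of which matches the cited sources.
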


The following Lemma is the extension to variable exponents of the well-known Brezis--Lieb
Lemma (see \cite{Brezis-Lieb}). The proof is analogous to that of \cite{Brezis-Lieb}. See Lemma 3.4 in \cite{FBS1}

\begin{lema}\label{Brezis-Lieb}
Let $f_n\to f$ a.e and $f_n\rightharpoonup f$ in $L^{p(x)}(\Omega)$ then
$$ \lim_{n\to\infty} \left(\int_\Omega|f_n|^{p(x)}\, dx - \int_\Omega |f-f_n|^{p(x)}\, dx\right) = \int_\Omega |f|^{p(x)}\, dx. $$
\end{lema}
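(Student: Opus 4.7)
The strategy is to adapt the classical Brezis--Lieb argument to the variable exponent setting; the essential new ingredient is a scalar pointwise inequality whose constants are uniform in $p(x)\in[1,p^+]$. Set $g_n := f_n - f$, so $g_n\to 0$ a.e. Since $f_n\rightharpoonup f$ in $L^{p(x)}(\Omega)$, the sequence $\{g_n\}$ is norm-bounded, and Proposition~\ref{norma.y.rho} yields $M:=\sup_n \int_\Omega |g_n|^{p(x)}\,dx<\infty$; likewise $\int_\Omega |f|^{p(x)}\,dx<\infty$. I would then establish the scalar inequality: for every $\varepsilon>0$ there is $C_\varepsilon>0$, depending only on $\varepsilon$ and $p^+$, such that
\[
\bigl| |a+b|^p - |b|^p - |a|^p \bigr| \;\leq\; \varepsilon|b|^p + C_\varepsilon |a|^p
\qquad\text{for all } a,b\in\R,\ p\in[1,p^+].
\]
This follows by combining the mean-value-type bound $\bigl||a+b|^p - |b|^p\bigr|\le C(p^+)(|a|^p+|a||b|^{p-1})$ with Young's inequality $|a||b|^{p-1}\le \varepsilon|b|^p + C'_\varepsilon|a|^p$; uniformity in $p\in[1,p^+]$ is transparent in both factors.

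Applying the above with $a=f(x)$, $b=g_n(x)$, $p=p(x)$ and setting
\[
W_n^\varepsilon(x) := \Bigl(\bigl||f_n|^{p(x)} - |g_n|^{p(x)} - |f|^{p(x)}\bigr| - \varepsilon |g_n|^{p(x)}\Bigr)_+,
\]
one has $0\le W_n^\varepsilon \le C_\varepsilon|f|^{p(x)}\in L^1(\Omega)$ together with $W_n^\varepsilon\to 0$ a.e.\ (because $g_n\to 0$ a.e.). Dominated convergence gives $\int_\Omega W_n^\varepsilon\,dx\to 0$, and from the pointwise bound $\bigl||f_n|^{p(x)}-|g_n|^{p(x)}-|f|^{p(x)}\bigr|\leq W_n^\varepsilon + \varepsilon|g_n|^{p(x)}$ integration and the triangle inequality yield
\[
\left|\int_\Omega\bigl(|f_n|^{p(x)}-|g_n|^{p(x)}\bigr)\,dx - \int_\Omega |f|^{p(x)}\,dx\right|\ \leq\ \int_\Omega W_n^\varepsilon\,dx + \varepsilon M.
\]
Letting $n\to\infty$ and then $\varepsilon\to 0^+$ yields the conclusion, noting that $|g_n|^{p(x)}=|f-f_n|^{p(x)}$.

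The main (and really only) delicate point is the uniformity of $C_\varepsilon$ over $p\in[1,p^+]$; once this is available, the rest is a routine dominated-convergence wrap-up paralleling the constant-exponent proof of \cite{Brezis-Lieb}. Note that the weak convergence hypothesis is used only to guarantee norm-boundedness of $\{f_n\}$; a uniform bound on $\int_\Omega|f_n|^{p(x)}\,dx$ would suffice in its place.
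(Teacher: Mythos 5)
Your proof is correct and is precisely the standard Brezis--Lieb argument adapted to variable exponents, which is exactly what the paper has in mind when it writes ``the proof is analogous to that of \cite{Brezis-Lieb}; see Lemma 3.4 in \cite{FBS1}.'' The one genuinely new point in the variable-exponent setting --- uniformity of $C_\varepsilon$ over $p\in[1,p^+]$ in the scalar inequality $\bigl||a+b|^p-|b|^p-|a|^p\bigr|\le\varepsilon|b|^p+C_\varepsilon|a|^p$ --- you identify and handle correctly, and the dominated-convergence wrap-up via $W_n^\varepsilon$ is the textbook argument.
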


We now define the variable exponent Lebesgue spaces on $\partial \Omega$. 
First we denote by $\PP(\partial\Omega)$  the set of $\HH^{N-1}-$measurable functions $r\colon\partial\Omega\to [1,\infty)$. 
We then assume that $\Omega$ is $C^1$ so that $\partial \Omega$ is a $(N-1)-$dimensional $C^1$ immersed manifold on $\R^N$ (although the trace theorem require less regularity on $\partial \Omega$, the $C^1$ regularity will be enough for our purposes). Therefore the boundary measure agrees with the $(N-1)-$Hausdorff measure restricted to $\partial\Omega$. We denote this measure by $dS$. Then, the Lebesgue spaces on $\partial\Omega$ are defined as
$$ L^{r(x)}(\partial\Omega):= \Big\{ u\in L^1_{\text{loc}}(\partial\Omega, dS)\colon \int_{\partial\Omega} |u(x)|^{r(x)}\, dS<\infty\Big\}, $$
and the corresponding (Luxemburg) norm is given by
$$ \|u\|_{L^{r(x)}(\partial\Omega)} = \|u\|_{r(x), \partial\Omega} := \inf\Big\{\lambda>0\colon \int_{\partial\Omega} \Big|\frac{u(x)}{\lambda}\Big|^{r(x)}\, dS\le 1\Big\}. $$

\medskip

We can define in a similar way the variable exponent Sobolev space $W^{1,p(x)}(\Omega)$ by
$$ W^{1,p(x)}(\Omega) = \{u\in L^{p(x)}(\Omega) \colon \partial_i u\in L^{p(x)}(\Omega) \text{ for } i=1,\dots,N\}, $$
where $\partial_i u = \frac{\partial u}{\partial x_i}$ is the $i^{th}-$distributional partial derivative of $u$.
This space has a corresponding modular given by \index{$\rho_{1,p(x)}$}
$$ \rho_{1,p(x)}(u) := \int_\Omega |u|^{p(x)} + |\nabla u|^{p(x)}\, dx, $$
and so the corresponding norm for this space is 
\begin{equation}\label{NormW1p}
 \|u\|_{W^{1,p(x)}(\Omega)} = \| u\|_{1,p(x)} : = \inf\Big\{\lambda>0\colon \rho_{1, p(x)}\left(\frac{u}{\lambda}\right)\leq 1\Big\}. 
\end{equation}
The $W^{1,p(x)}(\Omega)$ norm can also be defined as  $\|u\|_{p(x)} + \| \nabla u \|_{p(x)}$. Both norms turn out to be equivalent but we use the first one for convenience.

The following Sobolev trace Theorems are proved in \cite{Fan}.

\begin{teo}\label{trace}
Let $\Omega\subseteq\R^N$ be an open bounded domain with Lipschitz boundary and let $p\in \PP(\Omega)$ be such that $p\in W^{1,\gamma}(\Omega)$ with $1\leq p_{-}\leq p^{+}<N<\gamma$. Then there is a continuous boundary trace embedding $W^{1,p(x)}(\Omega)\subset L^{p_*(x)}(\partial\Omega)$.
\end{teo}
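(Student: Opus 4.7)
The plan is to adapt the classical proof of the Sobolev trace embedding to the variable-exponent setting. A standard partition of unity together with a Lipschitz straightening of the boundary reduces the problem to the following model situation: $\Omega$ is a cylinder $Q = B'\times(0,h)\subset\R^{N-1}\times\R_+$, and one estimates $\|u\|_{L^{p_*(x)}(B'\times\{0\})}$ by $\|u\|_{W^{1,p(x)}(Q)}$ for $u\in C^1(\bar Q)$; density of $C^1(\bar\Omega)$ in $W^{1,p(x)}(\Omega)$ then extends the bound. The hypothesis $p\in W^{1,\gamma}(\Omega)$ with $\gamma>N$ is preserved by the chart and, by Morrey's embedding, yields that $p$ (hence $p_*$) is H\"older continuous, which gives quantitative control on the oscillation of these exponents on small patches.

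On the model cylinder I would derive a pointwise trace identity via the fundamental theorem of calculus: for a smooth cutoff $\eta\in C^\infty_c([0,h))$ with $\eta(0)=1$,
$$|u(y,0)|^{p_*(y,0)} = -\int_0^h \partial_t\Big(\eta(t)|u(y,t)|^{p_*(y,0)}\Big)\,dt.$$
Expanding the derivative and using $|\eta'|\le C$ gives
$$|u(y,0)|^{p_*(y,0)} \le C\int_0^h \Big(|u(y,t)|^{p_*(y,0)} + |u(y,t)|^{p_*(y,0)-1}|\partial_t u(y,t)|\Big)\,dt,$$
and integration over $y\in B'$ produces a modular estimate whose right-hand side I must bound in terms of $\rho_{1,p(x)}(u)$ plus a subcritical term.

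To close the estimate, I would apply the variable-exponent Young inequality $ab\le a^{p(x)}/p(x) + b^{p'(x)}/p'(x)$ to the mixed term, taking the first factor with exponent $p'(y,0)$ and the second with $p(y,0)$. The resulting power on $|u|$ is $(p_*(y,0)-1)p'(y,0)$, and a direct computation from $p_*(x)=(N-1)p(x)/(N-p(x))$ gives $(p_*-1)p' = p^*$, where $p^*(x)=Np(x)/(N-p(x))$ is the interior critical Sobolev exponent. This critical term is then absorbed using the variable-exponent interior Sobolev embedding $W^{1,p(x)}(\Omega)\hookrightarrow L^{p^*(x)}(\Omega)$, which holds under the same regularity of $p$. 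Converting the modular estimate back to the Luxemburg norm via Proposition \ref{norma.y.rho}, and summing local contributions with the partition of unity, completes the proof.

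The main obstacle is reconciling $p_*(y,0)$ with $p_*(y,t)$, and more generally keeping track of exponents evaluated at nearby points when one wishes to compare the modular $\int|u|^{p_*(y,0)}\,dt$ with the variable-exponent norms. The H\"older continuity afforded by $p\in W^{1,\gamma}(\Omega)$ with $\gamma>N$ ensures that this oscillation can be made arbitrarily small on sufficiently small patches, so that cost terms of the form $|u|^{p_*(y,0)-p_*(y,t)}$ remain bounded by a constant on each patch; this is precisely why the regularity hypothesis on $p$ enters the statement. The remainder is careful but routine bookkeeping between modular inequalities and their Luxemburg-norm counterparts via Proposition \ref{norma.y.rho}.
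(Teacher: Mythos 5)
The paper does not prove Theorem~\ref{trace}; the statement is quoted as being proved in \cite{Fan}, so there is no internal proof to compare your argument against, and I assess it on its own terms. Your outline is the classical trace argument transplanted to variable exponents, and the central algebraic step is exact: with $p_*(x)=(N-1)p(x)/(N-p(x))$ one computes $(p_*(x)-1)\,p'(x)=Np(x)/(N-p(x))=p^*(x)$, so the Young split produces precisely the interior-critical power, while the remaining term with exponent $p_*(x)<p^*(x)$ is interior-subcritical and can be absorbed the same way. You also correctly identify the role of the hypothesis $p\in W^{1,\gamma}(\Omega)$ with $\gamma>N$: Morrey's embedding gives H\"older (hence log--H\"older) continuity of $p$, which is both what makes the interior embedding $W^{1,p(x)}(\Omega)\hookrightarrow L^{p^*(x)}(\Omega)$ available and what controls the oscillation between exponents evaluated on the boundary and in the interior. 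Two caveats, neither fatal, but both substantive. First, the ``routine bookkeeping'' you defer is really where the work lives: bounding factors like $|u|^{p^*(y,0)-p^*(y,t)}$ uniformly on each patch, and converting between modulars taken with different exponents and Luxemburg norms via Proposition~\ref{norma.y.rho} while summing the partition of unity, are delicate in variable exponent spaces and not automatic even with H\"older continuity of $p$. Second, the statement permits $p^-=1$, at which $p'\to\infty$ and your variable-exponent Young step degenerates; that endpoint needs a separate elementary argument. The strategy is sound, but a complete proof would require those two points to be filled in.
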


\noindent We used the following notation: for a $\mu-$measurable function $f$ we denote $f^+ := \sup f$ and $f^- := \inf f$, where by $\sup$ and $\inf$ we denote the essential supremum and essential infimum respectively with respect to the measure $\mu$.

The regularity assumption on $p$ can be relaxed when the exponent $r$ is unifortmly subcritical in the sense of (\ref{unif_subcrit}). It holds 

\begin{teo}\label{Compact_teo}
Let $\Omega\subset\R^N$ be an open bounded domain with Lipschitz boundary. Suppose that $p\in C^0(\bar{\Omega})$ and $1<p^-\leq p^+<N$. If $r\in \PP(\partial\Omega)$ is uniformly subcritical then the boundary trace embedding $W^{1,p(x)}(\Omega)\to L^{r(x)}(\partial\Omega)$ is compact.
\end{teo}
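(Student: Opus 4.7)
The plan is to combine the continuous trace embedding from Theorem \ref{trace} with a classical Rellich--Kondrachov type compact embedding for the constant exponent $p^-$, and then use a Vitali-type argument based on the uniform subcriticality gap to upgrade convergence in a fixed Lebesgue space to convergence in the variable exponent space $L^{r(x)}(\partial\Omega)$.

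First I would take a bounded sequence $\{u_n\}\subset W^{1,p(x)}(\Omega)$ and produce a subsequence converging almost everywhere on $\partial\Omega$. Since $\Omega$ is bounded and $p^-\leq p(x)$ on $\bar\Omega$, the inclusion $L^{p(x)}(\Omega)\hookrightarrow L^{p^-}(\Omega)$ is continuous, hence $W^{1,p(x)}(\Omega)\hookrightarrow W^{1,p^-}(\Omega)$ continuously as well. The classical compact trace embedding for constant exponent $W^{1,p^-}(\Omega)\hookrightarrow L^{1}(\partial\Omega)$ (valid since $p^->1$ and $\partial\Omega$ is Lipschitz) then yields, up to subsequence, $u_n\to u$ in $L^1(\partial\Omega)$, and so $u_n\to u$ a.e. on $\partial\Omega$.

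Next I would establish equi-integrability of $\{|u_n-u|^{r(x)}\}$ with respect to $dS$. By Theorem \ref{trace}, $\{u_n\}$ is bounded in $L^{p_*(x)}(\partial\Omega)$, so by Proposition \ref{norma.y.rho} we have $M:=\sup_n \int_{\partial\Omega}|u_n-u|^{p_*(x)}\,dS<\infty$. The uniform subcriticality assumption \eqref{unif_subcrit} provides $\delta>0$ with $r(x)\leq p_*(x)-\delta$ on $\partial\Omega$. A pointwise Young-type inequality gives, for every $\eps>0$ and every $a\geq 0$,
$$
a^{r(x)} \;\leq\; \eps\, a^{p_*(x)} \,+\, C(\eps),
$$
with $C(\eps)$ depending only on $\eps$, $p_*^+$, and $\delta$ (this is where the uniform gap $\delta$ enters crucially). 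Integrating over any measurable $E\subset\partial\Omega$,
$$
\int_E |u_n-u|^{r(x)}\,dS \;\leq\; \eps M + C(\eps)\,S(E).
$$
Given $\eta>0$, first choose $\eps$ so that $\eps M<\eta/2$, then $S(E)<\delta'$ small enough forces the right-hand side below $\eta$ uniformly in $n$, which is the desired equi-integrability.

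Finally, Vitali's convergence theorem combined with the a.e. convergence on $\partial\Omega$ gives $\int_{\partial\Omega}|u_n-u|^{r(x)}\,dS\to 0$, and one more invocation of Proposition \ref{norma.y.rho} translates this modular convergence into $\|u_n-u\|_{L^{r(x)}(\partial\Omega)}\to 0$. The main obstacle is the equi-integrability step: one must verify that the pointwise Young inequality really produces a constant $C(\eps)$ that is independent of $x\in\partial\Omega$, which relies precisely on the uniformity of the gap $p_*(x)-r(x)\geq \delta$ and the boundedness of $p_*$ and $r$; without this uniformity the argument breaks down, which is why mere pointwise subcriticality $r(x)<p_*(x)$ is not enough for compactness in general.
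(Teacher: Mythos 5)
The Vitali-type strategy you outline is sound in spirit, and several of its ingredients are correct: the a.e.~convergence step (via the continuous inclusion $W^{1,p(x)}(\Omega)\hookrightarrow W^{1,p^-}(\Omega)$ and the classical compact trace into $L^1(\partial\Omega)$), the $x$-uniform Young-type inequality $a^{r(x)}\le\eps\,a^{p_*(x)}+C(\eps)$ (which indeed only uses the uniform gap $\delta$ together with the boundedness of $p_*$ and $r$), and the passage from modular to norm convergence by Proposition \ref{norma.y.rho}. However there is a genuine gap in the step where you claim that $\{u_n\}$ is bounded in $L^{p_*(x)}(\partial\Omega)$ ``by Theorem \ref{trace}.'' Theorem \ref{trace} requires $p\in W^{1,\gamma}(\Omega)$ with $\gamma>N$ (which forces $p$ to be H\"older continuous), whereas the statement you are proving assumes only $p\in C^0(\bar\Omega)$. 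The paper itself remarks, immediately before Theorem \ref{Compact_teo}, that ``the regularity assumption on $p$ can be relaxed when the exponent $r$ is uniformly subcritical,'' so this relaxation \emph{is} the content of the theorem and cannot be obtained for free by citing the critical trace embedding. Note also that the paper does not contain a proof of this statement; it is quoted from \cite{Fan}.

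The gap can be bridged, but doing so requires an additional argument that you omitted. Since $p$ is uniformly continuous on the compact set $\bar\Omega$ and $q\mapsto q_*=\tfrac{(N-1)q}{N-q}$ is uniformly continuous on $[p^-,p^+]\subset(1,N)$, one may choose a Lipschitz exponent $\tilde p$ (hence in $W^{1,\gamma}(\Omega)$ for every $\gamma$) with $p-\theta\le\tilde p\le p$ on $\bar\Omega$, with $\theta>0$ so small that $\tilde p^->1$ and $\tilde p_*(x)\ge p_*(x)-\delta/2\ge r(x)+\delta/2$ for all $x\in\partial\Omega$. Then $W^{1,p(x)}(\Omega)\hookrightarrow W^{1,\tilde p(x)}(\Omega)$ continuously (because $\tilde p\le p$ and $\Omega$ is bounded), Theorem \ref{trace} does apply to $\tilde p$, and your Young inequality and Vitali argument run verbatim with $\tilde p_*$ and the gap $\delta/2$ in place of $p_*$ and $\delta$. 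Alternatively one can cover $\partial\Omega$ by finitely many small patches, bound $p$ from below by a constant on each, and invoke the classical constant-exponent trace theorem locally; again the uniform gap is exactly what makes the local exponents line up. Without one of these bridging steps your invocation of Theorem \ref{trace} is not justified under the stated hypotheses.
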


\begin{corol}
Let $\Omega\subset\R^N$ be an open bounded domain with Lipschitz boundary. Suppose that $p\in C^0(\bar{\Omega})$ and $1<p_{-}\leq p_{+}<N$. If $r\in C^0(\partial\Omega)$ satifies the condition
$$
1\leq r(x)<p_*(x)\quad x\in\partial\Omega
$$
then there is a compact boundary trace embedding $W^{1,p(x)}(\Omega)\to L^{r(x)}(\partial\Omega)$
\end{corol}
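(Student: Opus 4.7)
The plan is to deduce the corollary directly from Theorem \ref{Compact_teo} by showing that the pointwise strict subcriticality assumption $r(x)<p_*(x)$ on $\partial\Omega$ automatically upgrades to the uniform subcriticality condition \eqref{unif_subcrit}, thanks to continuity and compactness.

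First I would observe that since $p\in C^0(\bar\Omega)$ with $1<p^-\le p^+<N$, the critical trace exponent
$$ p_*(x)=\frac{(N-1)p(x)}{N-p(x)} $$
is a continuous function on $\partial\Omega$ (the denominator $N-p(x)$ stays bounded away from $0$). Together with the hypothesis $r\in C^0(\partial\Omega)$, this makes the function $\varphi(x):=p_*(x)-r(x)$ continuous on $\partial\Omega$.

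Next, by assumption $\varphi(x)>0$ for every $x\in\partial\Omega$. Since $\Omega$ is bounded with Lipschitz (in fact, for our purposes, at least $C^0$) boundary, $\partial\Omega$ is compact. A continuous, strictly positive function on a compact set attains its minimum, which must therefore be strictly positive:
$$ \inf_{x\in\partial\Omega}\bigl(p_*(x)-r(x)\bigr)=\min_{x\in\partial\Omega}\varphi(x)>0. $$
This is precisely the uniform subcriticality condition \eqref{unif_subcrit}.

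Finally, I would apply Theorem \ref{Compact_teo} with this $r$: all its hypotheses ($p\in C^0(\bar\Omega)$, $1<p^-\le p^+<N$, and $r$ uniformly subcritical) are satisfied, so the trace embedding $W^{1,p(x)}(\Omega)\hookrightarrow L^{r(x)}(\partial\Omega)$ is compact. There is no real obstacle here; the only point worth handling carefully is to make sure $p_*(x)$ is indeed continuous (which uses $p^+<N$ to avoid a vanishing denominator), after which compactness of $\partial\Omega$ does all the work.
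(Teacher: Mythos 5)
Your argument is correct and is essentially the only natural one: continuity of $p$ (with $p^+<N$ guaranteeing a nonvanishing denominator) gives continuity of $p_*$, so $p_*-r$ is a continuous strictly positive function on the compact set $\partial\Omega$ and hence has a strictly positive infimum, which is exactly the uniform subcriticality hypothesis of Theorem \ref{Compact_teo}. The paper states the corollary without spelling out a proof, but this is plainly the intended deduction, and your write-up matches it.
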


For much more on these spaces, we refer to \cite{libro}.

\section{Existence criteria for solutions}

We consider the equation
\begin{equation}\label{MainEq traza}
\begin{cases}
 -\Delta_{p(x)}u + h(x)|u|^{p(x)-2}u  =0 &\quad \text{in } \Omega, \\
 |\nabla u|^{p(x)-2}\partial_\nu u =  |u|^{r(x)-2}u &\quad \text{on } \partial \Omega,
\end{cases}
\end{equation}
where $\Omega\subset \R^N$ is a bounded domain, $p\in \PP(\Omega)$, $1<p^-\le p^+<N$, and $r\in \PP(\p\Omega)$ is critical in the sense that $\A_T\neq\emptyset$ where $\A_T$ is defined in (\ref{r.critic}). 
In order to study \eqref{MainEq traza} by means of variational methods, we need to consider the functional $\F\colon W^{1,p(x)}(\Omega)\to\R$ defined by
\begin{equation}\label{DefJt}
\F(u):= \int_\Omega \frac{1}{p(x)}\Big(|\nabla u|^{p(x)} + h(x)|u|^{p(x)}\Big)\, dx - \int_{\partial\Omega} \frac{1}{r(x)} |u|^{r(x)}\, dS.
\end{equation}
Then $u\in W^{1,p(x)}(\Omega)$ is a weak solution of \eqref{MainEq traza} if and only if $u$ is a critical point of $\F$.
We need to assume that the smooth function $h$ is such that the functional
\begin{equation}\label{DefI}
\J(u):=\int_\Omega |\nabla u|^{p(x)}+h(x)|u|^{p(x)} \,dx
\end{equation}
is coercive in the sense that the norm
\begin{equation}\label{coercive}
\|u\|:= \inf \left\{\lambda>0\, \int_\Omega \left|\frac{\nabla u + h(x)u(x)}{\lambda}\right|^{p(x)}\,dx \le 1\right\}, 
\end{equation}
is equivalent to the usual norm  $\|\cdot\|_{1,p(x)}$ of  $W^{1,p(x)}(\Omega)$ defined in (\ref{NormW1p}).

\medskip

It is not difficult to prove that $\F$ verifies the geometrical assumptions of the Mountain Pass Theorem (cf. the proof of Theorem \ref{teoMPt}). 
The first non-trivial result needed to apply the Mountain Pass Theorem is to check that the Palais--Smale condition holds below some critical energy level $c^*$ that can be computed explicitly in terms of the Sobolev trace constant $T(p(\cdot), r(\cdot), \Omega)$. Once this fact is proved, the main difficulty is to exhibit some Palais--Smale sequence with energy below the critical level $c^*$.

This approach has been used with success by several authors for treating critical elliptic problems, starting with the seminal papers of \cite{Aubin, Aubin2,BN}. See, for instance \cite{Aubin3,DHL,D,DH,Escobar,ER,Faget,FBS, HV, Saintier2, Saintier3,Schoen} and references therein. 

\medskip

Our first result gives an explicit value of the energy below which the functional $\F$ satisfy the Palais--Smale condition.

\begin{teo}\label{teoPScondt}
Assume that $h$ is such that $\J$ is coercive (see \eqref{coercive} above).  Then the functional $\F$ satisfies the Palais--Smale condition at level
$$
0<c<\inf_{x\in\A_T} \left(\frac{1}{p(x)} - \frac{1}{p_*(x)}\right) \bar T_{x}^{\frac{p(x)p_*(x)}{p_*(x)-p(x)}}.
$$
\end{teo}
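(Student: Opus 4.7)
The plan is the standard concentration--compactness approach to critical problems, implemented in the variable-exponent trace setting via the CCP of \cite{FBSS3}. Let $(u_n)\subset W^{1,p(x)}(\Omega)$ be a Palais--Smale sequence at level $c<c^*$. First I would establish boundedness of $(u_n)$ in $W^{1,p(x)}(\Omega)$ from the identity $\F(u_n)-\frac{1}{r^-}\langle\F'(u_n),u_n\rangle=c+o(\|u_n\|)$, the mountain-pass condition $p^+<r^-$, and the coercivity of $\J$. Passing to a subsequence yields $u_n\rightharpoonup u$ weakly, $u_n\to u$ a.e.\ and strongly in $L^{p(x)}(\Omega)$ by the compact embedding. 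A standard argument using the $(S_+)$-property of the $p(x)$-Laplacian on sets away from the atoms shows that $u$ is a weak solution of \eqref{MainEq traza}, whence $\F(u)\ge 0$ follows from $\langle\F'(u),u\rangle=0$, $p^+<r^-$, and the coercivity of $\J$. The trace CCP of \cite{FBSS3} then provides, along a further subsequence, at most countably many points $\{x_i\}_{i\in I}\subset\A_T$ and non-negative weights $\mu_i,\nu_i$ such that
\[
|\nabla u_n|^{p(x)}dx\rightharpoonup \mu\ge|\nabla u|^{p(x)}dx+\sum_{i\in I}\mu_i\delta_{x_i},\quad |u_n|^{r(x)}dS\rightharpoonup \nu=|u|^{r(x)}dS+\sum_{i\in I}\nu_i\delta_{x_i},
\]
with atom inequality $\mu_i\ge \bar T_{x_i}^{p(x_i)}\nu_i^{p(x_i)/p_*(x_i)}$.

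To locate each atom, fix $i\in I$ and a standard cutoff $\phi_\eps\in C_c^\infty(B_{2\eps}(x_i))$ with $\phi_\eps\equiv 1$ on $B_\eps(x_i)$ and $|\nabla\phi_\eps|\le C/\eps$, and pass to the limit first as $n\to\infty$ and then as $\eps\to 0$ in $\langle\F'(u_n),\phi_\eps u_n\rangle=o(1)$. The terms $\int_\Omega\phi_\eps(|\nabla u_n|^{p(x)}+h|u_n|^{p(x)})dx$ and $\int_{\p\Omega}\phi_\eps|u_n|^{r(x)}dS$ produce $\mu_i$ and $\nu_i$ respectively; the $h$-piece vanishes by the strong $L^{p(x)}$-convergence; and the cross term $\int_\Omega u_n|\nabla u_n|^{p(x)-2}\nabla u_n\cdot\nabla\phi_\eps\,dx$ vanishes via the variable-exponent H\"older inequality (Proposition~\ref{Holder}) combined with the Sobolev embedding $W^{1,p(x)}(\Omega)\hookrightarrow L^{p^*(x)}(\Omega)$, since $\nabla\phi_\eps$ is supported on a shrinking annulus whose $L^{p^*(x)}$-mass of $u$ tends to zero. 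This yields $\mu_i=\nu_i$, which combined with the CCP atom inequality gives the dichotomy
\[
\nu_i=0\quad\text{or}\quad \nu_i\ge \bar T_{x_i}^{\frac{p(x_i)p_*(x_i)}{p_*(x_i)-p(x_i)}}.
\]

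Finally, to rule out atoms I would use the clever energy decomposition
\[
c+o(1)=\F(u_n)-\tfrac{1}{p^+}\langle\F'(u_n),u_n\rangle=\int_\Omega\Bigl(\tfrac{1}{p(x)}-\tfrac{1}{p^+}\Bigr)(|\nabla u_n|^{p(x)}+h|u_n|^{p(x)})dx+\int_{\p\Omega}\Bigl(\tfrac{1}{p^+}-\tfrac{1}{r(x)}\Bigr)|u_n|^{r(x)}dS,
\]
in which both outer coefficients are non-negative. Splitting the bulk integral into the measure-valued $|\nabla u_n|^{p(x)}$ part and the strongly convergent $h|u_n|^{p(x)}$ part, applying the CCP decompositions of $\mu,\nu$, and observing that the $1/p^+$-contributions at each atom cancel algebraically (using $\mu_i=\nu_i$ and $r(x_i)=p_*(x_i)$), one obtains
\[
c\ge \F(u)+\sum_{i\in I}\Bigl(\tfrac{1}{p(x_i)}-\tfrac{1}{p_*(x_i)}\Bigr)\nu_i\ge \sum_{i\in I}\Bigl(\tfrac{1}{p(x_i)}-\tfrac{1}{p_*(x_i)}\Bigr)\nu_i,
\]
so any $\nu_i>0$ would force $c\ge c^*$, contradicting the hypothesis $c<c^*$. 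Hence $\nu_i=0$ for all $i$, which gives $u_n\to u$ in $L^{r(x)}(\p\Omega)$, and a standard monotonicity argument for the $p(x)$-Laplacian applied to $\langle\F'(u_n)-\F'(u),u_n-u\rangle\to 0$ yields the strong convergence $u_n\to u$ in $W^{1,p(x)}(\Omega)$. The main obstacle I expect is the careful execution of the atomic analysis in the variable-exponent setting: justifying the vanishing of the cross term via variable-exponent H\"older estimates with non-uniform scales, and verifying that the $1/p^+$-terms cancel algebraically to produce exactly the threshold $\bar T_{x_i}^{\frac{p(x_i)p_*(x_i)}{p_*(x_i)-p(x_i)}}$ defining $c^*$.
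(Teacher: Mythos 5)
Your proposal is correct and reaches exactly the threshold in the statement, but it implements the concentration--compactness strategy rather differently from the paper in two of the three main steps. For the atom analysis, you localize with cutoffs $\phi_\eps$ concentrated near each $x_i$ and test $\F'(u_n)$ against $\phi_\eps u_n$, then send $n\to\infty$ and $\eps\to 0$; this is the classical Lions-style argument, and it indeed gives $\mu_i=\nu_i$ once you handle the cross term on the shrinking annulus. The paper instead passes to $v_k:=u_k-u$, shows $v_k$ is a Palais--Smale sequence for the truncated functional $\tilde\F$, and tests $\tilde\F'(v_k)$ against $v_k\phi$ with a global $\phi\in C^1(\bar\Omega)$; combined with Brezis--Lieb this gives $\tilde\mu=\tilde\nu$ as measures, hence $\nu_i\ge\mu_i$, which is all that is needed. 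Your route avoids the Brezis--Lieb bookkeeping for $v_k$ but requires more care with the cross-term estimate, in particular the variable-exponent H\"older split $\frac{1}{p(x)}=\frac{1}{p^*(x)}+\frac{1}{N}$ over a thin annulus abutting the boundary. For the final estimate you use $\F(u_n)-\frac{1}{p^+}\langle\F'(u_n),u_n\rangle$, take limits, and then cancel the $1/p^+$-contributions at each atom using $\mu_i=\nu_i$ and $r(x_i)=p_*(x_i)$; the paper instead directly computes $\lim\tilde\F(v_k)=\int\big(\frac{1}{p(x)}-\frac{1}{r(x)}\big)\,d\tilde\nu$ from $\tilde\mu=\tilde\nu$, which avoids the algebraic cancellation but requires having first established $\F(u_k)\ge\tilde\F(v_k)+o(1)$ from $\F(u)\ge 0$. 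Both give $c\ge\sum_{i\in I}\big(\frac{1}{p(x_i)}-\frac{1}{p_*(x_i)}\big)\nu_i$, and the rest is identical. (A cosmetic note: your $(S_+)$-property argument for identifying $u$ as a weak solution replaces the paper's Egoroff-plus-truncation argument; both are standard and equivalent in outcome.)
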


\begin{proof}
Let $\{u_k\}_{k\in\N}\subset W^{1,p(x)}(\Omega)$ be a Palais--Smale sequence for $\F$. Recall that this means that the sequence $\{\F(u_k)\}_{k\in\N}$ is bounded, and that $\F'(u_k)\to 0$ strongly in the dual space $(W^{1,p(x)}(\Omega))'$.
Recalling that the functional $\J$ defined by (\ref{DefI}) is assumed to be coercive (see the norm (\ref{coercive}) above), it then follows that $\{u_k\}_{k\in\N}$ is bounded in $W^{1,p(x)}(\Omega)$. In fact, for $k$ large, we have that
\begin{align*}
c+1+o(1)\|u_k\| &\ge \F(u_k) - \frac{1}{r^-}\langle \F'(u_k), u_k\rangle \\
&\ge \big(\frac{1}{p^+}-\frac{1}{r^-}\big) \int_\Omega |\nabla u_k|^{p(x)} + h(x) |u_k|^{p(x)}\, dx 
+ \int_{\partial\Omega} \big(\frac{1}{r^-}-\frac{1}{r(x)}\big) |u_k|^{r(x)}\, dS\\
&\ge \big(\frac{1}{p^+}-\frac{1}{r^-}\big) \int_\Omega |\nabla u_k|^{p(x)} + h(x) |u_k|^{p(x)}\, dx 
= \big(\frac{1}{p^+}-\frac{1}{r^-}\big) \J(u_k).
\end{align*}

\medskip

We may thus assume that $u_k\rightharpoonup u$ weakly in $W^{1,p(x)}(\Omega)$. We claim that $u$ turns out to be a weak solution to \eqref{MainEq traza}. The proof of this fact follows closely the one in \cite{Saintier} and this argument is taken from \cite{Hebey, Evans2}, where the constant exponent case is treated.

In fact, since $\{u_k\}_{k\in\N}$ is a Palais--Smale sequence, we have that
$$
\langle\F'(u_k), v\rangle = \int_\Omega |\nabla u_k|^{p(x)-2}\nabla u_k \nabla v\, dx + \int_\Omega h |u_k|^{p(x)-2}u_k v\, dx - \int_{\p\Omega} |u_k|^{r(x)-2}u_k v\, dS = o(1)
$$
for any $v\in C^1(\bar\Omega)$. Without loss of generality, we can assume that $u_k\to u$ a.e. in $\Omega$, $\HH^{N-1}-$a.e. in $\p\Omega$,  and in $L^{p(x)}(\Omega)$. It is easy to see, from standard integration theory, that
$$
 \int_\Omega h |u_k|^{p(x)-2}u_k v\, dx \to  \int_\Omega h |u|^{p(x)-2}u v\, dx \quad \text{and}\quad  \int_{\p\Omega} |u_k|^{r(x)-2}u_k v\, dS \to  \int_{\p\Omega} |u|^{r(x)-2}u v\, dS,
$$
so the claim will follows if we show that
$$
\int_\Omega |\nabla u_k|^{p(x)-2}\nabla u_k \nabla v\, dx\to \int_\Omega |\nabla u|^{p(x)-2}\nabla u \nabla v\, dx.
$$
This is a consequence of the monotonicity of the $p(x)$-Laplacian. We can assume that there exist $\xi\in (L^{p'(x)}(\Omega))^N$ such that
$$
|\nabla u_k|^{p(x)-2}\nabla u_k \rightharpoonup \xi \quad\mbox{weakly in }(L^{p'(x)}(\Omega))^N.
$$
The idea is to show that $\nabla u_k\to \nabla u$ a.e. in $\Omega$, then this will imply that $\xi = |\nabla u|^{p(x)-2}\nabla u$ and thus, the claim.

Let $\delta>0$ then, by Egoroff's Theorem, there exists $E_\delta\subset \Omega$ such that $|\Omega\setminus E_\delta|<\delta$ and $u_k\to u$ uniformly in $E_\delta$. As a consequence, given $\eps>0$, there exists $k_0\in \N$ such that $|u_k(x) - u(x)|<\eps/2$ for $x\in E_\delta$ and for any $k\ge k_0$.

Define the truncation $\beta_\eps$ as
$$
\beta_\eps(t) = \begin{cases}
-\eps & \text{if } t\le -\eps\\
t & \text{if } -\eps< t <\eps\\
\eps& \text{if } t\ge \eps.
\end{cases}
$$

Now we make use of the following well known monotonicity inequality
\begin{equation}\label{monoton.ineq}
(|x|^{p-2}x - |y|^{p-2}y)(x-y)\ge 0
\end{equation}
which is valid for any $x,y\in \R^N$ and $p\ge 1$ and we obtain
$$
(|\nabla u_k|^{p(x)-2}\nabla u_k - |\nabla u|^{p(x)-2}\nabla u)\nabla \left(\beta_\eps(u_k-u)\right)
\ge 0,
$$
since $\nabla \beta_\eps(u_k-u) = \nabla u_k - \nabla u$ in $E_\delta$ and $\nabla \beta_\eps(u_k-u) = 0$ in $\Omega\setminus E_\delta$. Therefore, we obtain
\begin{align*}
\int_{E_\delta}(|\nabla u_k|^{p(x)-2}\nabla u_k - |\nabla u|^{p(x)-2}&\nabla u)(\nabla u_k- \nabla u)\, dx \\
&\le \int_\Omega (|\nabla u_k|^{p(x)-2}\nabla u_k - |\nabla u|^{p(x)-2}\nabla u)\nabla \beta_\eps(u_k-u)\, dx.
\end{align*}
Now, observe that $\beta_\eps(u_k-u)\rightharpoonup 0$ weakly in $W^{1, p(x)}_0(\Omega)$ and so
$$
\int_\Omega |\nabla u|^{p(x)-2}\nabla u\nabla \beta_\eps(u_k-u)\, dx\to 0.
$$
Now, for $k$ sufficiently large, we obtain that
$$
\int_\Omega |\nabla u_k|^{p(x)-2}\nabla u_k\nabla \beta_\eps(u_k-u)\, dx\le C\eps
$$
for some constant $C>0$. In fact, since $\beta_\eps(u_k-u)$ is bounded in $W^{1,p(x)}(\Omega)$,
$$
\langle \F'(u_k), \beta_\eps(u_k-u)\rangle = o(1),
$$
so that
$$
\int_\Omega |\nabla u_k|^{p(x)-2}\nabla u_k\nabla \beta_\eps(u_k-u)\, dx = o(1) + I_1 + I_2,
$$
where
$$
|I_1| = \Big|\int_{\p\Omega} |u_k|^{r(x)-2}u_k \beta_\eps(u_k-u)\, dS \Big| \le \eps \int_{\p\Omega} |u_k|^{r(x)-1}\, dS \le C\eps
$$
and
$$
|I_2| = \Big|\int_\Omega h |u_k|^{p(x)-2}u_k \beta_\eps(u_k-u)\, dx\Big|\le \eps \|h\|_{\infty}\int_\Omega |u_k|^{p(x)-1}\, dx\le C\eps.
$$
As a consequence, we get that
$$
0\le \limsup_{k\to\infty} \int_{E_\delta}(|\nabla u_k|^{p(x)-2}\nabla u_k - |\nabla u|^{p(x)-2}\nabla u)(\nabla u_k- \nabla u)\, dx \le C\eps.
$$
Since $\eps>0$ is arbitrary, it follows that $(|\nabla u_k|^{p(x)-2}\nabla u_k - |\nabla u|^{p(x)-2}\nabla u)(\nabla u_k- \nabla u)\to 0$ strongly in $L^1(E_\delta)$ and thus, up to a subsequence, also a.e. in $E_\delta$. By a standard diagonal argument, we can assume that $(|\nabla u_k|^{p(x)-2}\nabla u_k - |\nabla u|^{p(x)-2}\nabla u)(\nabla u_k- \nabla u)\to 0$ a.e. in $E_\delta$ for every $\delta>0$ and so the convergence holds a.e. in $\Omega$.

Finally, it is easy to see that $(|x_k|^{p-2}x_k - |x|^{p-2}x)(x_k-x)\to 0$ for $x_k,x\in \R^N$ and $p\ge 1$ imply that $x_k\to x$, so we get that $\nabla u_k\to \nabla u$ a.e. in $\Omega$. This concludes the proof of the claim.

\medskip

By the Concentration Compactness Principle for variable exponents in the trace case, see \cite{FBSS3}, it holds that
\begin{align}
& |u_k|^{r(x)}\, dS \rightharpoonup \nu = |u|^{r(x)}\, dS + \sum_{i\in I} \nu_i \delta_{x_i} \quad \mbox{weakly in the sense of measures,}\label{CCPtrace}\\
& |\nabla u_k|^{p(x)}\, dx \rightharpoonup \mu \ge |\nabla u|^{p(x)} \, dx + \sum_{i\in I} \mu_i \delta_{x_i} \quad \mbox{weakly in the sense of measures,}\label{CCPtrace2}\\
&\bar T_{x_i} \nu_i^{1/p^*(x_i)}\le \mu_i^{1/p(x_i)}\label{CCPtrace3},
\end{align}
where $I$ is a countable set, $\{\nu_i\}_{i\in I}$ and $\{\mu_i\}_{i\in I}$ are positive numbers, the points $\{x_i\}_{i\in I}$ belong to the critical set $\A_T\subset\partial\Omega$, and $\bar T_{x_i}$ is the localized best Sobolev constant around $x_i$ defined by (\ref{LocBestCste2}). 

It is not difficult to check that $v_k:=u_k-u$ is a PS--sequence for the functional $\tilde \F$ defined by 
$$\tilde \F(v):=\F(v)-\int_\Omega \frac{1}{p(x)} h|v|^{p(x)}\,dx.$$ 
Now, by the Brezis-Lieb lemma \ref{Brezis-Lieb} we get
\begin{align*}
\F(u_k)-\F(u) & =   \int_\Omega \frac{1}{p(x)}\Big[|\nabla v_k|^{p(x)} + h |v_k|^{p(x)}\Big]\, dx - \int_{\partial\Omega}\frac{1}{r(x)} |v_k|^{r(x)}\, dS + o(1)\\
& =  \tilde \F(v_k) + \int_\Omega \frac{1}{p(x)}  h |v_k|^{p(x)}\, dx + o(1)\\
& =  \tilde \F(v_k) + o(1).
\end{align*}
Independently since $u$ is a weak solution of \eqref{MainEq traza},  and recalling that $p^+<r^-$, we have 
\begin{equation*}
\begin{split}
 \F(u) & \ge \frac{1}{p^+} \int_\Omega \left(|\nabla u|^{p(x)}+h(x)|u|^{p(x)}\right)\,dx
           - \frac{1}{r^-} \int_{\partial\Omega} |u|^{r(x)}\,dS \\
      & = \left( \frac{1}{p^+} - \frac{1}{r^-} \right) \int_{\partial\Omega} |u|^{r(x)}\,dS \\
      & \ge 0.
\end{split}
\end{equation*}
Therefore, $\F(u_k)\ge \tilde \F(v_k) + o(1).$ Let $\phi\in C^1(\bar\Omega)$. As $\tilde \F'(v_k)\to 0$, we have
\begin{align*}
o(1) &= \langle \tilde \F'(v_k), v_k\phi\rangle \\
&= \int_\Omega |\nabla v_k|^{p(x)}\phi\, dx - \int_{\partial\Omega} |v_k|^{r(x)}\phi\, dS + \int_\Omega |\nabla v_k|^{p(x)-2}\nabla v_k \nabla\phi v_k\, dx\\
&= A - B + C.
\end{align*}
Since $\{v_k\}_{k\in\N}$ is bounded in $W^{1,p(x)}(\Omega)$ and converges to $0$ in $L^{p(x)}(\Omega)$, it is easy to see, using Hölder inequality as stated in proposition \ref{Holder}, that $C\to 0$ as $k\to\infty$. 
Moreover by means of Lemma \ref{Brezis-Lieb}, (\ref{CCPtrace}), and (\ref{CCPtrace2}), there holds 
$$ A\to \int_\Omega\phi\, d\tilde\mu \qquad \mbox{and}\qquad B\to \int_{\partial\Omega} \phi\, d\tilde\nu, $$
where $\tilde\mu = \mu - |\nabla u|^{p(x)}\, dx$ and $\tilde\nu =\nu - |u|^{r(x)}\, dS$. So we conclude that $\tilde\mu=\tilde\nu$. In particular $\nu_i\ge \mu_i$ ($i\in I$) from where we obtain with (\ref{CCPtrace3}) that $\nu_i\ge \bar T_{x_i}^{-\frac{(N-1)p(x_i)}{p(x_i)-1}}$. Hence
\begin{equation*}
\begin{split}
 c &=  \lim_{k\to\infty} \F(u_k) \ge \lim_{k\to\infty} \tilde \F(v_k)  =\int \frac{1}{p(x)}\,d\tilde\mu - \int \frac{1}{r(x)}\,d\tilde\nu\\
 & = \int \Big(\frac{1}{p(x)}-\frac{1}{r(x)}\Big) \,d\tilde\nu = \sum_{i\in I} \left(\frac{1}{p(x_i)}-\frac{1}{p_*(x_i)}\right) \nu_i \\
 & \ge \#(I) \inf_{i\in I} \frac{p(x_i)-1}{(N-1)p(x_i)} \bar T_{x_i}^{-\frac{(N-1)p(x_i)}{p(x_i)-1}}.
\end{split}
\end{equation*}
We deduce that if $c< \inf_{i\in I} \frac{p(x_i)-1}{(N-1)p(x_i)} \bar T_{x_i}^{-\frac{(N-1)p(x_i)}{p(x_i)-1}}$ then $I$ must be empty implying that $u_k\to u$ strongly in $W^{1,p(x)}(\Omega)$.
\end{proof}

As a corollary, we can apply the Mountain--Pass Theorem to obtain the following necessary existence condition:

\begin{teo}\label{teoMPt}
Assume that $r^->p^+$ and that $h$ is such that $\J$ is coercive (see (\ref{coercive}) above). 
If there exists $v\in W^{1,p(x)}(\Omega)$ such that
\begin{equation}\label{CCPCondt}
 \sup_{s>0} \F(sv) < \inf_{x\in\A_T} \left(\frac{1}{p(x)} - \frac{1}{p_*(x)}\right) \bar T_{x}^{\frac{p(x)p_*(x)}{p_*(x)-p(x)}}
\end{equation}
then \eqref{MainEq traza} has a non-trivial nonnegative solution.
\end{teo}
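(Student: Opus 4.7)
The plan is a standard application of the Mountain Pass Theorem (MPT), combined with Theorem \ref{teoPScondt}. To ensure nonnegativity of the solution, I would apply the MPT not to $\F$ but to the truncated functional
$$ \F_+(u) := \int_\Omega \frac{1}{p(x)}\bigl(|\nabla u|^{p(x)} + h(x)|u|^{p(x)}\bigr)\,dx - \int_{\p\Omega} \frac{1}{r(x)}(u_+)^{r(x)}\,dS, $$
where $u_+:=\max(u,0)$. Since $\F(sv)=\F(s|v|)$, I may assume $v\ge 0$, so that $\F_+(sv)=\F(sv)$ and the hypothesis \eqref{CCPCondt} transfers to $\F_+$. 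Any critical point $u$ of $\F_+$ will then be nonnegative: testing $\langle\F_+'(u),u_-\rangle=0$ with the negative part $u_-=\max(-u,0)$ and using that $(u_+)^{r(x)-1}u_-\equiv 0$ on $\p\Omega$ yields $\J(u_-)=0$, which by the coercivity \eqref{coercive} forces $u_-\equiv 0$. Such a $u$ then solves the original equation \eqref{MainEq traza}.

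Next I would verify the MPT geometry for $\F_+$. For $\|u\|<1$, the coercivity norm \eqref{coercive} together with Proposition \ref{norma.y.rho} yields $\J(u)\ge\|u\|^{p^+}$, while the trace embedding of Theorem \ref{trace} combined with $r^->p^+$ gives $\int_{\p\Omega}(u_+)^{r(x)}\,dS\le C\|u\|^{r^-}$, so $\F_+(u)\ge (p^+)^{-1}\|u\|^{p^+}-C(r^-)^{-1}\|u\|^{r^-}\ge\alpha>0$ on a small sphere $\|u\|=\rho$. The finiteness of $\sup_s\F(sv)$ forces $\int_{\p\Omega} v^{r(x)}\,dS>0$ (otherwise $\F(sv)\to+\infty$), and then $\F(sv)\le C_1 s^{p^+}-C_2 s^{r^-}\to-\infty$ as $s\to\infty$ (again using $r^->p^+$), so one may choose $s_1$ with $\|s_1v\|>\rho$ and $\F_+(s_1v)=\F(s_1v)<0$. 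Setting
$$ c:=\inf_{\gamma\in\Gamma}\max_{t\in[0,1]}\F_+(\gamma(t)),\qquad \Gamma:=\{\gamma\in C([0,1],W^{1,p(x)}(\Omega)):\gamma(0)=0,\ \gamma(1)=s_1v\}, $$
the linear path $\gamma_0(t)=ts_1v$ gives $c\le\sup_{s>0}\F(sv)<c^*$, while the sphere estimate gives $c\ge\alpha>0$.

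Finally, Theorem \ref{teoPScondt} supplies the Palais--Smale condition for $\F_+$ at level $c$, and the classical MPT then produces a critical point $u$ of $\F_+$ with $\F_+(u)=c>0$, hence $u\not\equiv 0$; the first paragraph then gives $u\ge 0$ and $u$ solves \eqref{MainEq traza}. The main technical point in this plan is the transfer of Theorem \ref{teoPScondt} from $\F$ to $\F_+$: the only places where the boundary nonlinearity $|u_k|^{r(x)}\,dS$ enters the proof are the Brezis--Lieb step and the concentration--compactness decomposition \eqref{CCPtrace}--\eqref{CCPtrace3}. Since $0\le u_+\le|u|$ pointwise, replacing $|u_k|^{r(x)}\,dS$ by $(u_{k,+})^{r(x)}\,dS$ preserves the atomic structure with the same localized constants $\bar T_{x_i}$, so the chain of inequalities bounding $c$ from below by $\inf_{i}\bigl(\tfrac{1}{p(x_i)}-\tfrac{1}{p_*(x_i)}\bigr)\bar T_{x_i}^{p(x_i)p_*(x_i)/(p_*(x_i)-p(x_i))}$ goes through verbatim, and the strict inequality $c<c^*$ forces the singular set $I$ to be empty, which gives the strong convergence needed to conclude.
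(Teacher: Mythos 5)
Your plan follows the paper's overall strategy --- Mountain Pass Theorem combined with the Palais--Smale condition of Theorem \ref{teoPScondt} --- and your verification of the mountain pass geometry (coercivity of $\J$ together with $p^+<r^-$, plus the trace embedding for the small-sphere estimate) matches the paper's own proof essentially line by line. Where you genuinely differ is that you actually prove the \emph{nonnegativity} asserted in the statement, which the paper's published proof omits entirely: it applies the MPT directly to $\F$, verifies the geometry, and stops, without saying why the resulting critical point may be taken $\ge 0$. Your device is the standard one and it works: replace $\F$ by the truncated functional $\F_+$ with $(u_+)^{r(x)}$ in the boundary term, observe that $\F(sv)=\F(s|v|)$ so you may take $v\ge 0$ and \eqref{CCPCondt} passes to $\F_+$, and note that any critical point $u$ of $\F_+$ satisfies $\J(u_-)=0$ by testing against $u_-$, whence $u_-=0$ by coercivity. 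The one point you treat lightly --- and correctly flag as the main technical issue --- is that Theorem \ref{teoPScondt} is stated and proved for $\F$, not for $\F_+$. To transfer it cleanly one should apply the trace concentration--compactness principle to the sequence $(u_k)_+$, which is bounded in $W^{1,p(x)}(\Omega)$ since $|\nabla(u_k)_+|\le|\nabla u_k|$, and then redo the $\tilde\mu=\tilde\nu$ step of the paper's proof for the truncated boundary nonlinearity, using the identity $((v_k)_+)^{r(x)-1}v_k=((v_k)_+)^{r(x)}$; the resulting lower bound on the Palais--Smale level carries over with the same localized constants $\bar T_{x_i}$. With that detail written out, your argument is complete and in fact supplies a step the paper leaves unproved.
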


\begin{proof}
The proof is an immediate consequence of the Mountain--Pass Theorem, Theorem \ref{teoPScondt} and assumption \eqref{CCPCondt}.
In fact, it suffices to verify that $\F$ has the Mountain--Pass geometry and that $\F(su)<0$ for some $s>0$. Concerning the latter condition notice that for $s>1$,
\begin{equation*}
\begin{split}
 \F(su) & = \int_\Omega \frac{s^{p(x)}}{p(x)}\left(|\nabla u|^{p(x)}+h(x)|u|^{p(x)}\right)\,dx
           - \int_{\partial\Omega} \frac{s^{r(x)}}{r(x)}|u|^{r(x)}\,dS \\
       & \le s^{p^+} \int_\Omega \frac{1}{p(x)}\left(|\nabla u|^{p(x)}+h(x)|u|^{p(x)}\right)\,dx
          - s^{r^-}\int_{\partial\Omega} \frac{1}{r(x)}|u|^{r(x)}\,dS,
\end{split}
\end{equation*}
which tends to $-\infty$ as $s\to +\infty$ since $r^->p^+$.

It remains to see that $\F$ has the Mountain--Pass geometry. Clearly $\F(0)=0$ and, 
if $\| v\|_{1,p(x)}=s$ is small enough, then 
$$ \int_\Omega |\nabla v|^{p(x)} + h |v|^{p(x)}\, dx \ge c_1 \| v\|_{1,p(x)}^{p^+} = c_1 s^{p^+} $$
since $\J$ is coercive, and on the other hand
$$ \|v\|_{r(x),\partial\Omega} \le C\| v\|_{1,p(x)} = Cs<1 $$
for $s$ small, so that 
$$ \int_{\partial\Omega} |v|^{r(x)}\, dS \le c_2\|v\|_{1,p(x)}^{r^-} = c_2 s^{r^-}. $$
Therefore
$$ \F(v) \ge \frac{c_1}{p^+} s^{p^+} - \frac{c_2}{r^-} s^{r^-} >0 $$
since $p^+<r^-$. This completes the proof.
\end{proof}

\section{Local conditions for \eqref{CCPCondt}}

In this section we provide  local conditions for \eqref{CCPCondt} to hold. These conditions are analogous to the ones found in \cite{FBSS2} where the critical problem for the $p(x)-$Laplacian with Dirichlet boundary condition was studied.

The idea is to evaluate $\F(sz_\ve)$ for  a suitable test function $z_\ve$ constructed by a scaled and truncated version of the extremal for $\bar K(N, p(x))^{-1}$ for a critical point $x\in \A_T$. Then, a refined asymptotic analysis will yield the desired result.

In order to construct the test function we need to recall the {\em Fermi coordinates} from differential geometry. Briefly speaking, the Fermi coordinates describe a neighborhood of a point $x_0\in\p\Omega$ with variables $(y,t)$ where $y\in\R^{N-1}$ are the coordinates in a local chart of $\p\Omega$ such that $y=0$ corresponds to $x_0$, and $t>0$ is the distance to $\partial\Omega$ along the unit inward normal vector.

\begin{defi}[Fermi Coordinates]\label{Fermi}\index{Fermi Coordinates}
We consider the following change of variables around a point $x_0\in\partial\Omega$.

We assume that $x_0=0$ and that $\partial\Omega$ has the following representation in a neighborhood $V$ of $0$:
$$
\partial\Omega\cap V = \{ x\in V\colon x_n = \psi(x'),\ x'\in U\subset \R^{N-1}\}, \ \ \Omega\cap V = \{ x\in V\colon x_n > \psi(x'),\ x'\in U\subset \R^{N-1}\}.
$$
The function $\psi\colon U\subset \R^{N-1}\to \R$ is assumed to be at least of class $C^2$ and that $\psi(0)=0$, $\nabla \psi(0)=0$.

The change of variables is then defined as $\Phi\colon U\times (0,\delta)\to \Omega\cap V$
$$ \Phi(y,t) = (y,\psi(y)) + t\nu(y), $$
where $\nu(y)$ is the unit inward normal vector, i.e.
$$ \nu(y) = \frac{(-\nabla \psi(y), 1)}{\sqrt{1+|\nabla\psi(y)|^2}}. $$
\end{defi}

\noindent It is well known that for $\delta>0$ small $\Phi$ defines a smooth diffeomorphism (see \cite{Escobar}).
For a general construction of the Fermi coordinates in differential manifolds, we refer to the book \cite{tubes}.

\medskip

Now, we are in position to construct the test functions needed in order to satisfy \eqref{CCPCondt}. Assume that $0\in \mathcal{A}_T\subset \p\Omega$. 
Then, the test-functions we consider are defined in the Fermi coordinates by 
$$ v_\eps(x)=\eta(y,t)V_{\eps, 0}(y,t),\quad x=\Phi(y,t), $$
where $V_{\eps,0}$ is defined in (\ref{Extremal}) by rescaling an extremal $V$ of $\bar K(N, p(0))^{-1}$, 
and $\eta\in C^\infty_c(B_{2\delta}\times [0,2\delta),[0,1])$ is a smooth cut-off function.
We normalize $v_\eps$ by considering the function $z_\eps$ defined by 
$$z_\ve=C v_\ve, \quad C= \bar K(N,p(0))^{-\frac{p(0)}{p(0)_*-p(0)}} \|V\|_{p(0)_*, \p\R^N_+}^{-1}.$$ 
With this choice of $C$, the function $Z(y,t):=C V(y,t)$ satisfies 
$$ \int_{\R^N_+} |\nabla Z|^{p(0)}\, dydt = \int_{\p\R^N_+} |Z|^{p(0)_*}\, dy 
                                     = \bar K(N,p(0))^{-\frac{p(0) p(0)_*}{p(0)_*-p(0)}}.
$$

\medskip 

From now on, we assume that $p\in \PP(\Omega)$ and $r\in \PP(\partial\Omega)$ are of class $C^2$, $0\in \p\Omega$ and we let $p:=p(0)$ and $r:=r(0)$.

In the propositions \ref{propIntLp}, \ref{propA} and \ref{propD.A} in the Appendix we compute some asymptotic expansions needed in order to properly evaluate $\F(sz_\ve)$. These propositions are fundamental in the proof of our next result. We choose to postpone their proofs to the appendix because they are technical and long.

\medskip

Eventually the following result provides a sufficient local condition for \eqref{CCPCondt} to hold:

\begin{teo}\label{teo.cond.local}
Assume that $r^->p^+$, and that $h$ is such that $\J$ is coercive. 
Assume moreover that there exists a point $x_0\in \A_T$ such that $\bar T = \bar T_{x_0}$ and such that $x_0$ is a local minimum of $p(x)$ and a local maximum of $r(x)$ and $p(x_0)<\min\{\sqrt{N},\frac{N^2}{3N-2}\}$. Assume eventually that one of the following conditions hold
\begin{enumerate}
 \item $\frac{\p p}{\p t}(x_0)>0$,
 \item $\frac{\p p}{\p t}(x_0)=0$ and $H(x_0)>0$ or
 \item $\frac{\p p}{\p t}(x_0)=0$, $H(x_0)=0$, $1<p(x_0)<2$ and $h(x_0)<0$ or
 \item $\frac{\p p}{\p t}(x_0)=0$, $H(x_0)=0$, $p(x_0)\geq 2$ and $\Delta p(x_0)>0$ or $\Delta_y r(x_0)<0$.
\end{enumerate}
Then there exists a nontrivial solution to \eqref{MainEq traza}. Here $\frac{\p p}{\p t}(x_0)=-\partial_\nu p(x_0)$ (with $\nu$ the unit exterior normal vector), $\Delta_y r(x_0):=\Delta (r\circ\Phi(\cdot,0))(0)$, and $\Delta p(x_0):=\Delta (p\circ\Phi)(0)$. 

\end{teo}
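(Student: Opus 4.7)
The plan is to verify the sufficient condition \eqref{CCPCondt} of Theorem \ref{teoMPt} with the test function $v = z_\ve$ constructed above, for $\ve$ small enough. Since $\bar T = \bar T_{x_0}$ and a standard rescaling argument (flattening $\p\Omega$ near $x_0$ and freezing $p$ at the value $p(0)$) identifies the localized best constant $\bar T_{x_0}$ with the half-space constant $\bar K(N,p(0))^{-1}$, the right-hand side of \eqref{CCPCondt} reduces to
$$K^* := \Big(\frac{1}{p(0)}-\frac{1}{p_*(0)}\Big) \bar K(N,p(0))^{-\frac{p(0) p_*(0)}{p_*(0)-p(0)}}.$$
Thus the whole argument reduces to proving $\sup_{s>0}\F(sz_\ve) < K^*$ for $\ve$ small, after which Theorem \ref{teoMPt} directly yields a nontrivial nonnegative solution.

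First I would establish that $s \mapsto \F(sz_\ve)$ has a unique positive critical point $s_\ve$, which is a global maximum, and that $s_\ve \to 1$ as $\ve \to 0$. This follows from $r^- > p^+$, the coercivity of $\J$ and the normalization of $Z$: to leading order $\F(sz_\ve) = \big(\tfrac{s^{p(0)}}{p(0)} - \tfrac{s^{p_*(0)}}{p_*(0)}\big)\bar K(N,p(0))^{-\frac{p(0)p_*(0)}{p_*(0)-p(0)}} + o(1)$, a profile maximized at $s=1$. Substituting $s = s_\ve$ into the integrals defining $\F$ and using the asymptotic expansions of Propositions \ref{propIntLp}, \ref{propA} and \ref{propD.A} of the Appendix produces
$$\sup_{s>0}\F(sz_\ve) = K^* + R(\ve),$$
where $R(\ve)$ gathers four distinct contributions: the deviation of $p(x)$ from $p(0)$ in the bulk (through $\frac{\p p}{\p t}(0)$ and $\Delta p(0)$), the deviation of $r(x)$ from $r(0)$ on $\p\Omega$ (through $\Delta_y r(0)$, the tangential first-order term vanishing since $0$ is a local maximum of $r$), the curvature of $\p\Omega$ (through the mean curvature $H(0)$ entering via the Jacobian of the Fermi map), and the potential $h$ (through $h(0)\int Z^{p(0)}$). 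The hypotheses $p(0) < \sqrt N$ and $p(0) < \tfrac{N^2}{3N-2}$ are exactly what is needed for the moment integrals of $Z$ and $\nabla Z$ appearing as coefficients to converge.

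The conclusion then reduces to a case-by-case check that the dominant term of $R(\ve)$ is strictly negative. In case (1) the leading correction is of order $\ve\log(1/\ve)$ with sign determined by $-\frac{\p p}{\p t}(0)$, so the hypothesis $\frac{\p p}{\p t}(0)>0$ yields $R(\ve)<0$. In case (2), where $\frac{\p p}{\p t}(0)=0$, the next surviving term is of order $\ve$ and has sign $-H(0)$, so $H(0)>0$ gives $R(\ve)<0$, reproducing the classical mechanism of \cite{Escobar, FBS}. In case (3) the contribution of $h$ becomes the leading correction and has sign $h(0)$; the assumption $1<p(0)<2$ guarantees that it dominates the $\ve^2\log(1/\ve)$ terms coming from the second-order variations, and $h(0)<0$ gives $R(\ve)<0$. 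Finally, in case (4) with $p(0)\geq 2$ the leading $\ve^2\log(1/\ve)$ terms have signs $-\Delta p(0)$ and $+\Delta_y r(0)$, either of which makes the sum strictly negative under the stated assumption.

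The main technical obstacle is the precise asymptotic expansion of the variable-exponent integrals $\int_\Omega |\nabla v_\ve|^{p(x)}\,dx$ and $\int_{\p\Omega}|v_\ve|^{r(x)}\,dS$, since one can no longer factor the integrand as a pure power. Following the device used in \cite{FBSS2, FBSS3}, one writes $|\nabla v_\ve|^{p(x)} = |\nabla v_\ve|^{p(0)}\exp\big((p(x)-p(0))\log|\nabla v_\ve|\big)$ and Taylor-expands both the exponential and $p(x)-p(0)$ around $0$, producing integrals of the form $\int (p(x)-p(0))^k|\nabla v_\ve|^{p(0)}(\log|\nabla v_\ve|)^k$; after rescaling by $\ve$ these generate the $\ve\log(1/\ve)$ and $\ve^2\log(1/\ve)$ terms above, and the boundary integral is handled analogously. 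Carrying out this bookkeeping at the required order, while keeping track of the contribution of the Fermi Jacobian expansion (where $H(0)$ appears) and of the cut-off $\eta$, is the content of the Appendix; once granted, the four-case sign analysis completes the proof.
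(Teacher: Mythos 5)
Your proposal follows the same route as the paper: test the criterion \eqref{CCPCondt} with the rescaled-and-truncated extremal $z_\ve$, identify $\bar T_{x_0}$ with $\bar K(N,p(0))^{-1}$, feed the asymptotic expansions of Propositions \ref{propIntLp}, \ref{propA}, \ref{propD.A} into $\F(sz_\ve)$, locate the maximizing parameter $s_\ve\to 1$ (with $f_0'(1)=0$ so that the shift in $s$ is subleading), and then run the same four-case sign analysis on the leading correction ($\ve\log(1/\ve)$ via $\partial_t p$, $\ve$ via $H$, $\ve^p$ via $h$ when $p<2$, $\ve^2\log(1/\ve)$ via $\Delta p$ and $\Delta_y r$ when $p\ge 2$). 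This is essentially the paper's own proof, with only stylistic differences in how the expansion mechanism and the role of the bounds $p(x_0)<\sqrt N$, $p(x_0)<N^2/(3N-2)$ are narrated.
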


\noindent Notice that, as a consequence of the definition of the Fermi coordinates, we have that $\Delta_y r(x_0)$ coincides with the Laplacian of $r$ at $x_0$ for the natural metric of $\p\Omega$. 

\begin{proof}
We assume, without loss of generality that $x_0=0$ and denote $p=p(0)$. Observe that $r(0) = p_*$.

We first consider the case where $\partial_tp(0)>0$. In fact, from Propositions \ref{propIntLp}, \ref{propA} and \ref{propD.A}, we have
\begin{align*}
f_\varepsilon(s)=\F(s z_\varepsilon)&=\bar D_0+\bar D_1\varepsilon\ln\varepsilon- \bar A_0 + o(\varepsilon\ln\varepsilon)\\
&=f_0(s)+\varepsilon\ln\varepsilon f_1(s)+O(\varepsilon)
 \end{align*}
$C^1-$ uniformily in $s\in[0,s_0]$, with
$$
f_0(s)= \bar K(N,p)^{-\frac{p_* p}{p_*-p}}\left(\frac{s^p}{p}-\frac{s^{p_*}}{p_*}\right)
$$
 and
 $$
 f_1(s)=-\frac{N}{p}\frac{s^p}{p}\p_t p(0)\int_{\R^n_+} t|\nabla Z|^p \,dydt
 $$
Notice that $f_0$ reaches its maximum in $[0,s_0]$ at $s=1$. Moreover, it is a nodegenerate maximum since $f_0''(1)=(p-p_*) \bar K(N,p)^{-\frac{p_* p}{p_*-p}}\neq 0$. It follows that $f_\varepsilon$ reaches a maximum at $s_\varepsilon=1+a\varepsilon\ln\varepsilon+O(\varepsilon)$ for $a=-\frac{f_1'(1)}{f_0''(1)}$. Hence
$$
\sup_{s>0} \F(s z_\varepsilon) = \F(s_\varepsilon z_\varepsilon) = \left(\frac{1}{p} - \frac{1}{p_*}\right) \bar K(N,p)^{-\frac{p_* p}{p_*-p}}+f_1(1)\varepsilon\ln\varepsilon+O(\varepsilon)
$$
If $\p_t p(0)>0$ then $f_1(1)<0$ and the result follows.

Assume now that $\partial_t p(0)=0$ and $H(0)>0$. Then we have
\begin{align*}
\F(s z_\eps)&=\bar D_0+\bar D_2\varepsilon+ o(\eps) - \bar A_0\\
&=f_0(s)+f_2(s)\varepsilon+o(\varepsilon)
\end{align*}
$C^1-$ uniformily in $[0,s_0]$, with
$$
f_2(s)=- H(0) \frac{s^p}{p} \int_{\R^N_+}t|\nabla Z|^p \,dydt + \frac{H(0)}{N-1} s^p \int_{\R^N_+}\frac{t|y|^2}{r^2}|\nabla Z|^p \,dydt
$$
As before $f_\varepsilon$ reaches its maximum at $s_\varepsilon=1+a\varepsilon+o(\varepsilon)$ with $a=\frac{f_2'(1)}{f_0''(1)}$. So,
$$
\sup_{s>0}\F(s z_\varepsilon)=\F(s_\varepsilon z_\varepsilon)=  \left(\frac{1}{p} - \frac{1}{p_*}\right) \bar K(N,p)^{-\frac{p_* p}{p_*-p}} + f_2(1)\varepsilon+o(\varepsilon)
$$
So, we need that $f_2(1)<0$, i.e.
$$
- H(0) \frac{1}{p} \int_{\R^N_+}t|\nabla Z|^p \,dydt + \frac{H(0)}{N-1}  \int_{\R^N_+}\frac{t|y|^2}{r^2}|\nabla Z|^p \,dydt<0.
$$
But,
\begin{equation*}
\begin{split}
& -\frac{1}{p}\int_{\R^N_+}t|\nabla Z|^p \,dydt +\frac{1}{N-1} \int_{\R^N_+}\frac{t|y|^2}{r^2}|\nabla Z|^p \,dydt \\
& \le  \left(-\frac{1}{p}+\frac{1}{N-1}\right)\int_{\R^N_+}t|\nabla Z|^p \,dydt \\
& <  0
\end{split}
\end{equation*}
if $p<N-1$. So, since $H(0)>0$, the result follows.

Now suppose that $\partial_t p(0)=0$ and $H(0)=0$. Then
$$
\F(sz_\eps)=\bar D_0+\bar D_4\varepsilon^2\ln\varepsilon+o(\varepsilon^2\ln\varepsilon)+\bar C_0\varepsilon^p+o(\varepsilon^p)-\bar A_0 - \bar A_1\varepsilon^2\ln\varepsilon.
$$
If $1<p<2$
$$
\F(sz_\eps)=(\bar D_0-\bar A_0)+\bar C_0\varepsilon^p+o(\varepsilon^p) = f_0(s)+f_3(s)\varepsilon^p+o(\varepsilon^p)
$$
with
$$
f_3(s)=h(0)\frac{s^p}{p}\int_{\R^N_+} |\nabla Z|^p \,dydt.
$$
As before $f_\varepsilon$ reaches its maximum at $s_\varepsilon=1+a\varepsilon^p+o(\varepsilon^p)$ with $a=\frac{f_3'(1)}{f_0''(1)}$. Then,
$$
\sup_{s>0}\F(s z_\varepsilon) = \F(s_\varepsilon z_\varepsilon) =  \left(\frac{1}{p} - \frac{1}{p_*}\right) \bar K(N,p)^{-\frac{p_* p}{p_*-p}} + f_3(1) \varepsilon^p + o(\varepsilon^p)
$$
So, we need that $f_3(1)<0$. But, this is equivalent to $h(0)<0$.

If $p\geq 2$, we have
$$
\F(sz_\eps)=(\bar D_0 - \bar A_0) + (\bar D_4 - \bar A_1)\varepsilon^2\ln\varepsilon + o(\varepsilon^2\ln\varepsilon) = f_0(s) + f_4(s) \varepsilon^2\ln\varepsilon + o(\varepsilon^2\ln\varepsilon),
$$
with
\begin{align*}
f_4(s) = &-\frac{s^p}{p} \frac{N}{2p}\left(\partial_{tt} p(0) \int_{\R^N_+} t^2 |\nabla Z|^p\, dydt + \Delta_y p(0) \int_{\R^N_+} |y|^2 |\nabla Z|^p\, dydz\right)\\
& + \frac{s^{p_*}}{p_*} \frac{1}{2p_*} \Delta_y r(0) \int_{\p\R^N_+} |y|^2 Z^{p_*}\, dy.
\end{align*}

As before, we need that $f_4(1)<0$. Since $0$ is a local minimum of $p(x)$ and a local maximum of $r(x)$ and $\p_t p(0)=0$ it easily follows that $f_4(1)\le 0$. Moreover if one of the following inequalities
$$
\Delta_y r(0) \le 0\le \Delta p(0)
$$
is strict, then $f_4(1)<0$ and the result follows.
\end{proof}

\appendix

\section{Asymptotic expansions}\label{A.traza}

In this section we provide the asymptotic expansions needed in the proof of Theorem \ref{teo.cond.local}.

First we need the following asymptotic expansions for the Jacobian of the Fermi coordinates that are proved in \cite{Escobar}.
\begin{lema}\label{fermi.asymptotic} With the notation introduced in Definition \ref{Fermi}, the following asymptotic expansions hold
$$
J\Phi(y,t) = 1- Ht + O(t^2 + |y|^2),
$$
where $H$ is the mean curvature of $\partial\Omega$.

Also, if we denote $v(y,t) = u(\Phi(y,t))$,
$$
|\nabla u(x)|^2 = (\p_t v)^2+\sum_{i,j=1}^N\left(\delta^{ij}+2h^{ij}t+O(t^2+|y|^2)\right)\p_{y_i}v \p_{y_j}v,
$$
where $h^{ij}$ is the second fundamental form of $\partial\Omega$.
\end{lema}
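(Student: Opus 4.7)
The plan is a direct differential-geometric computation from the definition of the Fermi parametrization $\Phi(y,t) = (y,\psi(y)) + t\nu(y)$, using only the normalizations $\psi(0) = 0$, $\nabla\psi(0) = 0$, together with the fact that $\nu$ is a unit normal field to the graph of $\psi$. I would organize the argument in four steps.

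\emph{Step 1 (expansion of $\nu$).} Since $|\nabla\psi(y)|^2 = O(|y|^2)$ and $\p_{y_k}|\nabla\psi|^2(0) = 0$, one gets $(1+|\nabla\psi|^2)^{-1/2} = 1 + O(|y|^2)$, so that $\nu(0)=e_N$ and $\p_{y_k}\nu(0) = (-\p_{y_k}\nabla\psi(0),0)$, with the derivative of the denominator contributing only at order $|y|^2$ or higher.

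\emph{Step 2 (Jacobian).} At $(y,t)=(0,0)$ the columns $\p_{y_j}\Phi$ ($j<N$) and $\p_t\Phi=\nu$ form the identity matrix, so $J\Phi(0,0)=1$. By multilinearity of the determinant and the expansion of Step 1,
\[
\p_t J\Phi(0,0) \;=\; \sum_k \det\bigl(e_1,\dots,\p_{y_k}\nu(0),\dots,e_{N-1},e_N\bigr) \;=\; -\sum_k \psi_{kk}(0),
\]
which equals $-H$ up to the chosen normalization of the mean curvature. An analogous cofactor computation shows $\p_{y_k}J\Phi(0,0)=0$ because each perturbed column lies in the span of the remaining columns of the identity. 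This yields $J\Phi(y,t) = 1 - Ht + O(t^2+|y|^2)$.

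\emph{Step 3 (metric).} Writing $g_{ij}=\langle \p_i\Phi,\p_j\Phi\rangle$, the Fermi structure gives $g_{NN}=|\nu|^2\equiv 1$, and for $j<N$
\[
g_{jN} \;=\; \langle (e_j',\p_j\psi),\nu(y)\rangle + t\langle \p_{y_j}\nu,\nu\rangle \;=\; 0,
\]
the first summand vanishing because $\nu$ is normal to the graph of $\psi$ and the second because $|\nu|^2\equiv 1$. For $i,j<N$, expanding
\[
 g_{ij}(y,t) \;=\; \delta_{ij} + \p_i\psi\,\p_j\psi + t\bigl[\langle (e_i',\p_i\psi),\p_{y_j}\nu\rangle + \langle \p_{y_i}\nu,(e_j',\p_j\psi)\rangle\bigr] + t^2\langle \p_{y_i}\nu,\p_{y_j}\nu\rangle,
\]
the first correction is $O(|y|^2)$ since $\nabla\psi(0)=0$, and the coefficient of $t$ equals $-2\psi_{ij}(0)+O(|y|) = 2h_{ij}+O(|y|)$ with $h_{ij}$ the second fundamental form at $x_0$ (with the appropriate sign for the inward normal).

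\emph{Step 4 (inversion and gradient).} Since $g$ is block diagonal, inverting the tangential $(N-1)\times(N-1)$ block via a Neumann series gives $g^{ij}=\delta^{ij}+2h^{ij}t+O(t^2+|y|^2)$. Substituting in $|\nabla u|^2 = g^{ij}\p_iv\,\p_jv$ with $v=u\circ\Phi$, and using $g^{NN}=1$, produces the claimed decomposition. The main obstacle is controlling the $O(|y|t)$ mixed terms that appear when one keeps track of $\psi_{ij}(y)$ rather than $\psi_{ij}(0)$ in Step 3. These are harmless for the stated error order since $|y|t \le \tfrac12(|y|^2+t^2)$, but the absorption must be done cleanly so that only the value of the second fundamental form at the base point appears in the leading correction.
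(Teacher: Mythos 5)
The paper does not prove this lemma at all: it simply quotes the expansions and refers to Escobar's paper, so your computation is supplying an argument the authors omit. Your route is the standard one (expand $\nu$ using $\nabla\psi(0)=0$, observe the Fermi block structure $g_{NN}=1$, $g_{jN}=0$, expand the tangential block in $t$, invert), and Steps 1, 2 and the $g_{jN}=0$ computation are correct. One remark: the two displayed formulas are most cleanly obtained together from the single expansion of the metric, since $J\Phi=\sqrt{\det g}$ and the block structure gives $\det g=\det\bigl(\delta_{ij}-2h_{ij}t+O(t^2+|y|^2)\bigr)=1-2Ht+O(t^2+|y|^2)$, hence $J\Phi=1-Ht+O(t^2+|y|^2)$ with $H=\operatorname{tr}(h)$; this makes the consistency of the two statements (and of the sign of $H$ against the sign of $h^{ij}$) automatic, whereas your separate cofactor computation in Step 2 leaves that consistency to be checked by hand.

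There is one genuine sign inconsistency between your Steps 3 and 4 that you should repair. In Step 3 you find that the coefficient of $t$ in $g_{ij}$ is $-2\psi_{ij}(0)$ and you declare this to be $+2h_{ij}$, i.e.\ you set $h_{ij}=-\psi_{ij}(0)$; but then the Neumann series $(I+A)^{-1}=I-A+\cdots$ gives $g^{ij}=\delta^{ij}-2h^{ij}t+O(t^2+|y|^2)$, the opposite sign from what you assert in Step 4 and from what the lemma states. The fix is to use the convention forced by the rest of the paper: with the inward normal, $h_{ij}=\langle\partial_{y_i}\partial_{y_j}X,\nu\rangle=\psi_{ij}(0)$ and $H=\operatorname{tr}(h)=\Delta\psi(0)$ (this is also what makes your Step 2 read $\partial_tJ\Phi(0,0)=-\sum_k\psi_{kk}(0)=-H$, and what the authors need in Proposition \ref{propD.A}, where $\bar h=\frac{1}{N-1}\sum h^{ii}$ must equal $\frac{H}{N-1}$). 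Then $g_{ij}=\delta_{ij}-2h_{ij}t+O(t^2+|y|^2)$ and the inversion produces $g^{ij}=\delta^{ij}+2h^{ij}t+O(t^2+|y|^2)$ with the correct sign. With that bookkeeping corrected, the argument is complete; your observation about absorbing the mixed $O(|y|t)$ terms into $O(t^2+|y|^2)$ is right and is all that is needed to keep only the value of $h_{ij}$ at the base point in the first-order term.
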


The goal of this section is to prove the following propositions.

\begin{prop}\label{propIntLp} There holds
\begin{equation}\label{IntLp}
 \int_{\Omega} f(x) |v_\eps|^{p(x)} \,dx = \bar C_0\eps^p+o(\eps^p) \quad \text{ with } \quad \bar C_0 = f(0)\int_{\R^N_+} V^p\,dx.
\end{equation}
\end{prop}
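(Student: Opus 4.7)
My plan is to change variables to Fermi coordinates centered at the critical point $0\in\A_T$, rescale by $\eps$ to reveal the natural $\eps^p$ factor, and conclude by dominated convergence.

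Since $v_\eps=\eta V_{\eps,0}$ is supported in a small neighborhood of $0$, the change $x=\Phi(y,t)$ to Fermi coordinates gives
$$ I:=\int_\Omega f|v_\eps|^{p(x)}\,dx=\int_{U\times[0,2\delta)} f(\Phi)\,\eta^{p(\Phi)}\,V_{\eps,0}^{p(\Phi)}\,J\Phi\,dy\,dt.$$
Substituting $(y,t)=(\eps z,\eps s)$ and using the scaling $V_{\eps,0}(\eps z,\eps s)=\eps^{-(N-p)/p}V(z,s)$ together with $dy\,dt=\eps^N\,dz\,ds$, I rewrite the $\eps$-powers as $\eps^N\cdot\eps^{-p_\eps(N-p)/p}=\eps^p\cdot\eps^{(p-p_\eps)(N-p)/p}$, where $p_\eps(z,s):=p(\Phi(\eps z,\eps s))$. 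Hence
$$ I=\eps^p\int_{\Omega_\eps} f(\Phi(\eps z,\eps s))\,\eta(\eps z,\eps s)^{p_\eps}\,V(z,s)^{p_\eps}\,\eps^{(p-p_\eps)(N-p)/p}\,J\Phi(\eps z,\eps s)\,dz\,ds,$$
where $\Omega_\eps$ is the rescaled support of $\eta$.

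Next I identify the pointwise limit of the inner integrand as $f(0)V(z,s)^p$. By continuity, $f(\Phi(\eps z,\eps s))\to f(0)$, $J\Phi(\eps z,\eps s)\to 1$, $\eta(\eps z,\eps s)\to 1$ (as $\eta\equiv 1$ near $0$) and $p_\eps\to p$. Using $p\in C^1$, one has $|p-p_\eps|=O(\eps|(z,s)|)$ on bounded sets, so $V^{p_\eps}\to V^p$ and the factor $\eps^{(p-p_\eps)(N-p)/p}\to 1$ since its logarithm is $O(\eps|(z,s)||\ln\eps|)\to 0$ for fixed $(z,s)$.

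The main technical point is producing a uniform dominating function, since the factor $\eps^{(p-p_\eps)(N-p)/p}$ is not bounded on the whole rescaled domain. I would split the integration at $|(y,t)|=\eps^{1/p}$, i.e.\ $|(z,s)|\sim\eps^{-(p-1)/p}$. On the inner part $|(z,s)|\le\eps^{-(p-1)/p}$ one has $|p-p_\eps|\lesssim\eps^{1/p}$, so $\eps^{(p-p_\eps)(N-p)/p}=1+o(1)$ uniformly; dominated convergence with the bound $C(V^{p_\delta^-}+V^{p_\delta^+})$, integrable on $\R^N_+$ since $p(0)<\sqrt{N}$ (from Theorem \ref{teo.cond.local}) and $\delta$ is taken small, then yields the limit $f(0)\int_{\R^N_+}V^p$. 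On the outer part I would estimate directly in original coordinates using the explicit formula for $V_{\eps,0}$; the strict inequality $p(0)<\sqrt{N}$ produces, after radial integration, a contribution of order $\eps^{p+\nu}$ for some $\nu>0$, hence $o(\eps^p)$. Combining yields $I=f(0)\bigl(\int_{\R^N_+}V^p\bigr)\eps^p+o(\eps^p)$, as claimed.
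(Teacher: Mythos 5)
Your proof is correct and carries out in detail the route the paper outsources to \cite{FBSS2}, Proposition 5.1: Fermi change of variables with the Jacobian expansion from Lemma \ref{fermi.asymptotic}, rescaling $(y,t)=(\eps z,\eps s)$, and dominated convergence with a cutoff at $|(y,t)|=\eps^{1/p}$ (the threshold at which $V_\eps=1$), with the outer tail estimated as $O(\eps^{N/p})=o(\eps^p)$ thanks to $p(0)<\sqrt N$ (which is also precisely what makes $V^p$ integrable on $\R^N_+$). One small remark worth recording: the scaling $V_{\eps,0}(\eps z,\eps s)=\eps^{-(N-p)/p}V(z,s)$ that you use is indeed the correct, scale-invariant normalization, consistent with the explicit formula for $|\nabla V_\eps|$ in the proof of Lemma \ref{propB}, whereas \eqref{Extremal} as printed has exponent $-(N-p)/(p-1)$, which appears to be a typo.
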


\begin{prop}\label{propA} If $p<\frac{N-1}{2}$,
\begin{equation}\label{IntBoundary}
 \int_{\p \Omega} f(x) |v_\eps|^{r(x)} \,dS_x = \bar A_0 + \bar A_1 \eps^2 \ln\eps + o(\eps^2 \ln \eps)
\end{equation}
with
$$
\bar A_0=f(0)\int_{\R^{N-1}}V(y,0)^{p_*}\,dy,
$$
and
\begin{equation*}
\begin{split}
\bar A_1 & = -\dfrac{N-p}{2p}f(0)\int_{\R^{N-1}} (D^2r(0)y,y)V(y,0)^{p_*}\,dy \\
     & = -\dfrac{1}{2p_*}f(0)\Delta r(0) \int_{\R^{N-1}} |y|^2 V(y,0)^{p_*}\,dy.
\end{split}
\end{equation*}

\end{prop}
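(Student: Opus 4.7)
My plan is to work in the Fermi chart around $0\in\A_T$, perform the concentration rescaling $y=\eps z$, and Taylor expand all slowly varying quantities. Inside the Fermi chart, the boundary $\p\Omega$ is parametrized by $y\in U\subset\R^{N-1}$ via $y\mapsto\Phi(y,0)$, with induced surface element $dS_x=J_0(y)\,dy$ where $J_0(y)=\sqrt{1+|\nabla\psi(y)|^2}$. Since $\psi\in C^2$ with $\psi(0)=0$ and $\nabla\psi(0)=0$, Taylor expansion gives $J_0(y)=1+O(|y|^2)$. On the support of $\eta(\cdot,0)$ we have $v_\eps(\Phi(y,0))=\eta(y,0)V_{\eps,0}(y,0)$, and the contribution from beyond the support is $O(\eps^\infty)$ by the decay of $V_{\eps,0}(\cdot,0)$. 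Writing $\tilde r(y):=r(\Phi(y,0))$,
\[
\int_{\p\Omega}f|v_\eps|^{r(x)}\,dS_x = \int_U f(\Phi(y,0))\,\eta(y,0)^{\tilde r(y)}\,V_{\eps,0}(y,0)^{\tilde r(y)}\,J_0(y)\,dy + O(\eps^\infty).
\]

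\textbf{Rescaling and the variable-exponent expansion.} Substituting $y=\eps z$ and using the trace-invariant scaling $V_{\eps,0}(\eps z,0)=\eps^{-\gamma}V(z,0)$ with $\gamma=(N-p)/p$ (so that $\gamma p_*=N-1$ precisely cancels the Jacobian factor $\eps^{N-1}$), and setting $\rho_\eps(z):=\tilde r(\eps z)-p_*$, the rescaled integrand is
\[
f(\Phi(\eps z,0))\,\eta(\eps z,0)^{\tilde r(\eps z)}\,V(z,0)^{p_*}\,\eps^{-\gamma\rho_\eps(z)}\,V(z,0)^{\rho_\eps(z)}\,J_0(\eps z).
\]
Since $0$ is a local maximum of $\tilde r$ on $\p\Omega$, $\nabla\tilde r(0)=0$ and $\rho_\eps(z)=\tfrac{\eps^2}{2}(D^2\tilde r(0)z,z)+O(\eps^3|z|^3)$. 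The two key expansions are
\[
\eps^{-\gamma\rho_\eps}=1-\gamma\,\rho_\eps\log\eps+O\bigl((\rho_\eps\log\eps)^2\bigr),\qquad V(z,0)^{\rho_\eps}=1+\rho_\eps\log V(z,0)+O(\rho_\eps^2),
\]
which isolate the single term of order $\eps^2\log\eps$, namely $-\gamma\rho_\eps(z)\log\eps$. All remaining contributions from these expansions, as well as the expansions $f(\Phi(\eps z,0))=f(0)+O(\eps|z|)$ and $J_0(\eps z)=1+O(\eps^2|z|^2)$, are at most of order $\eps^2$, hence $o(\eps^2\log\eps)$.

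\textbf{Identification of the expansion.} Assembling the pieces and letting $\eps\to 0$ by dominated convergence with envelope $CV(z,0)^{p_*}\in L^1(\R^{N-1})$, the leading order is exactly
\[
\bar A_0 = f(0)\int_{\R^{N-1}} V(z,0)^{p_*}\,dz,
\]
while the coefficient of $\eps^2\log\eps$ picks out
\[
\bar A_1 = -\frac{\gamma}{2}f(0)\int_{\R^{N-1}}(D^2\tilde r(0)z,z)V(z,0)^{p_*}\,dz = -\frac{N-p}{2p}f(0)\int(D^2 r(0)y,y)V(y,0)^{p_*}\,dy,
\]
which is the stated formula. The equivalent closed form in terms of $\Delta r(0)$ is immediate from the radial symmetry of $V(\cdot,0)$, which gives $\int z_i z_j V(z,0)^{p_*}\,dz=\delta_{ij}(N-1)^{-1}\int|z|^2V(z,0)^{p_*}\,dz$, together with the identity $\gamma/(N-1)=1/p_*$.

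\textbf{Main obstacle.} The essential technical difficulty is showing that, after rescaling, all the error terms integrate to $o(\eps^2\log\eps)$ uniformly in the rescaled domain $|z|\le 2\delta/\eps$. The hypothesis $p<(N-1)/2$ enforces enough polynomial decay of $V(\cdot,0)$ at infinity to guarantee integrability of $|z|^2V(z,0)^{p_*}$, as well as the higher-moment envelopes controlling the $O((\rho_\eps\log\eps)^2)$ remainder, whose generic bound is of order $\eps^4|z|^4\log^2\eps$. The argument would split the rescaled integral into an inner piece $|z|\le R$, where pointwise convergence plus dominated convergence is direct, and an outer piece $R<|z|<2\delta/\eps$, where one exploits that $\rho_\eps(z)\log\eps$ remains bounded by an integrable envelope thanks to the decay of $V(z,0)^{p_*}$; letting $R\to\infty$ after $\eps\to 0$ closes the expansion with the required precision.
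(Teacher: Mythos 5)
Your argument is correct and follows essentially the same route as the paper: reduce to a flat boundary integral with surface element $1+O(|y|^2)$, rescale $y=\eps z$ with the trace--Sobolev invariant exponent $\gamma=(N-p)/p$, and Taylor-expand $r(\eps z)-p_*$ to isolate the coefficient of $\eps^2\ln\eps$ --- precisely the computation the paper defers to Proposition 5.1 of \cite{FBSS2}. Two minor caveats: the tail contribution from $|y|\gtrsim\delta$ is only $O(\eps^{\frac{N-1}{p-1}-o(1)})$ (not $O(\eps^\infty)$, though still $o(\eps^2\ln\eps)$ under $p<(N-1)/2$), and the uniform error control on the annulus $R<|z|<2\delta/\eps$, which you rightly single out as the main obstacle and where the hypothesis $p<(N-1)/2$ actually does its work (note $\int|z|^4V(z,0)^{p_*}dz$ need not converge for large $N$, so the crude $\eps^4|z|^4\ln^2\eps$ envelope must be truncated at $|z|\le 2\delta/\eps$), is sketched rather than carried out.
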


\begin{prop} \label{propD.A} Assume that $p<N^2/(3N-2)$. Then
 \begin{equation*}
\begin{split}
 \int_{\Omega} f(x)|\nabla v_\eps(x)|^{p(x)}\,dx
   = \bar D_0 + \bar D_1\eps\ln\eps + \bar D_2\eps + \bar D_3(\eps\ln\eps)^2 + \bar D_4\eps^2\ln\eps + O(\eps^2),
\end{split}
\end{equation*}
with
\begin{equation*}
\begin{split}
\bar D_0  = f(0)\int_{\R^N_+} |\nabla V|^p \,dydt, \quad
\bar D_1  = -\frac{N}{p}f(0)\p_tp(0)\int_{\R^N_+} t|\nabla V|^p \,dydt,
\end{split}
\end{equation*}
and, assuming that $\p_tp(0)=0$,
\begin{equation*}
\begin{split}
\bar D_2 & = (\p_tf(0)-Hf(0))\int_{\R^N_+}t|\nabla V|^p \,dydt + p\bar h f(0) \int_{\R^N_+}\frac{t|y|^2}{r^2}|\nabla V|^p \,dydt,  \\
\bar D_3 & = 0 \\
\bar D_4 & = -\frac{N}{2p}f(0)\p_{tt}p(0)\int_{\R^N_+}t^2|\nabla V|^p \,dydt
         - \frac{N}{2(N-1)p}f(0)\Delta_yp(0)\int_{\R^N_+}|y|^2|\nabla V|^p \,dydt
\end{split}
\end{equation*}

\end{prop}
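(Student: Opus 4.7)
The plan is to pull the integral back to Fermi coordinates, rescale $y=\eps\tilde y,\, t=\eps\tilde t$, and Taylor-expand every ingredient in $\eps$, being especially careful about the logarithms generated by the fact that the exponent $p(x)$ itself depends on the concentrating point.

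Concretely, I would first write
$$ \int_\Omega f|\nabla v_\eps|^{p(x)}\,dx=\int f(\Phi)|\nabla v_\eps(\Phi)|^{p(\Phi)}J\Phi\,dy\,dt, $$
expand $J\Phi$ and $|\nabla v_\eps(\Phi)|^2=(\p_tv)^2+\sum_{i,j}(\delta_{ij}+2h^{ij}t+O(|y|^2+t^2))\p_{y_i}v\p_{y_j}v$ via Lemma \ref{fermi.asymptotic} (with $v=\eta V_{\eps,0}$), and note that the $\nabla\eta$ contributions, supported where $|y|+t\gtrsim\delta$, are $O(\eps^M)$ for every $M$ by the polynomial decay of $V$. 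After the rescaling $V_{\eps,0}(\eps\tilde y,\eps\tilde t)=\eps^{-\alpha}V(\tilde y,\tilde t)$ with $\alpha+1=N/p$, and with the Taylor expansions $p(\Phi)=p+\eps\pi_1+\eps^2\pi_2+O(\eps^3)$, $f(\Phi)=f(0)+O(\eps)$, $J\Phi=1-\eps H\tilde t+O(\eps^2)$, and the metric correction $1+2\eps\tilde t\, h^{ij}\p_{\tilde y_i}V\p_{\tilde y_j}V/|\nabla V|^2+O(\eps^2)$, the integrand becomes a double series of the form $\sum_{k,j}\eps^k(\log\eps)^j M_{kj}(\tilde y,\tilde t)$ multiplied by $|\nabla V|^p$.

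The central mechanism is the identity $a^{p+q}=a^p\exp(q\log a)$ applied to $a=|\nabla v_\eps(\Phi)|$ with $q=p(\Phi)-p$: after rescaling, $\log a=-(\alpha+1)\log\eps+\log|\nabla V|+O(\eps)$, so the exponent expansion splits cleanly into a $\log\eps$ part and an $\eps$-independent part, and expanding $\exp(q\log a)$ through second order produces the $\eps\log\eps$, $\eps$, $(\eps\log\eps)^2$, and $\eps^2\log\eps$ orders. Collecting and integrating then gives the coefficients: for $\bar D_1$ only the $\p_tp(0)\tilde t$ piece of $\pi_1=\p_tp(0)\tilde t+\nabla_yp(0)\cdot\tilde y$ survives the radial symmetry of $|\nabla V|^p$ in $\tilde y$; under $\p_tp(0)=0$, combined with $\nabla_yp(0)=0$ which is automatic in the theorem's application where $0$ is a local minimum of $p$ on $\bar\Omega$, one has $\pi_1\equiv0$, which forces $\bar D_3=\tfrac12(\alpha+1)^2 f(0)\int\pi_1^2|\nabla V|^p=0$ and eliminates $\pi_1$ from $\bar D_4$; $\bar D_2$ then assembles from the $-H\tilde t$ Jacobian contribution, the $\p_tf(0)\tilde t$ derivative of $f$, and the metric correction (the latter reducing via rotational symmetry of $V$ in $\tilde y$ to a multiple of $H/(N-1)$); and $\bar D_4$ receives only the $-(\alpha+1)\eps^2\pi_2\log\eps$ contribution, with the cross terms of $\pi_2$ killed by odd symmetry and the Hessian piece $D_y^2 p(0)\tilde y\otimes\tilde y$ averaging to $\Delta_y p(0)|\tilde y|^2/(N-1)$.

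The main obstacle is the careful justification of the termwise expansion and, above all, the convergence of the weighted moments $\int_{\R^N_+}t^2|\nabla V|^p$, $\int_{\R^N_+}|y|^2|\nabla V|^p$ together with their $\log|\nabla V|$-weighted variants that show up in the remainder estimates; the hypothesis $p(0)<N^2/(3N-2)$ is precisely what is required to make these integrals finite and to absorb the residual terms into $O(\eps^2)$. Secondary technical points include verifying that the cutoff $\eta$ contributes only negligibly (by the polynomial decay of $V$) and producing a remainder uniform in $s\in[0,s_0]$, as needed in the proof of Theorem \ref{teo.cond.local}.
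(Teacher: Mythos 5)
Your proposal matches the paper's proof step for step: pull back to Fermi coordinates via Lemma \ref{fermi.asymptotic}, discard the cutoff region, rescale, split the exponent via $a^{p+q}=a^p\exp(q\log a)$ with $\log a = -\tfrac{N}{p}\log\eps+\log|\nabla V|+O(\eps)$, use radial symmetry in $y$ to kill odd terms and average the Hessian, and control the tails and weighted moments with $p<N^2/(3N-2)$. The paper organizes the gradient term through the auxiliary Lemmas \ref{propB} and \ref{propC} (for the metric-correction piece and the plain $|\nabla V_\eps|^{p(y,t)}$ piece), but your direct expansion yields the same $\bar D_i$; your remark that $\nabla_y p(0)=0$ (implicit in the local-minimum hypothesis of Lemma \ref{propB}) is also needed, alongside $\p_t p(0)=0$, for $\bar D_3=0$ and for the clean form of $\bar D_4$, is a correct and useful observation. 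One small inaccuracy: the $\nabla\eta$ contribution is not $O(\eps^M)$ for every $M$; the polynomial decay of $V$ gives only a fixed polynomial gain, namely $O(\eps^{(N-p)/(p-1)})$, and one uses $p<N^2/(3N-2)\le (N+2)/3$ (exactly as in the paper's estimate of $R(\eps)$) to absorb it into $O(\eps^2)$ — so that tail, too, relies on the smallness assumption on $p$.
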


\begin{proof}[Proof of Proposition \ref{propIntLp}]
We write
 $$ \int_{\Omega} f(x) |v_\eps|^{p(x)} \,dx = \int_{\R^N_+} f(y,t)|v_\eps(y,t)|^{p(y,t)}(1+O(|y|^2+|t|))\,dydt.   $$
Now the result follows as in \cite{FBSS2} Proposition 5.1.
\end{proof}

\begin{proof}[Proof of Proposition \ref{propA}]
We have
$$
\int_{\p\Omega} f v_\eps^{r(x)}\,dS = \int_{\R^{N-1}} f(y,\psi(y)) v_\eps(y,\psi(y))^{r(y,\psi(y))}(1+O(|y|^2))\,dy.
$$

Now the proof follows as in \cite{FBSS2} Proposition 5.1.
\end{proof}

To treat the gradient term, we need the following result:

\begin{lema} \label{propB}Assume $p<N^2/(3N-2)$ and that $p=p(y,t)$ has a local minimum at $(y,t)=(0,0)$.
Given a bounded $g\in  C^2(\Omega)$ and real numbers $a^{ij}$, $1\le i,j\le N-1$, we have
\begin{equation}\label{Gradient1}
\begin{split}
 \sum_{i,j=1}^{N-1} a^{ij}\int_{\R^N_+}& g(y,t)\eta(y,t)|\nabla V_\eps|^{p(y,t)-2}\p_iV_\eps(y,t)\p_jV_\eps(y,t)\,dydt \\
 & = \bar B_0 + \bar B_1\eps\ln\eps + \bar B_2 \eps + \bar B_3 (\eps\ln\eps)^2 + \bar B_4 \eps^2\ln\eps + O(\eps^2)
\end{split}
\end{equation}
where $\p_i=\frac{\p}{\p y_i}$, and
\begin{equation*}
\begin{split}
\bar B_0  =& \bar a g(0) \int_{\R^N_+} |\nabla V(y,t)|^p\frac{|y|^2}{r^2} \,dydt,  \quad
\bar B_1  = -\frac{N}{p}g(0)\p_tp(0)\bar a \int_{\R^N_+} |\nabla V(y,t)|^p\frac{|y|^2 t}{r^2} \,dydt \\
\bar B_2 =& \bar a \int_{\R^N_+} |\nabla V(y,t)|^p \frac{t |y|^2}{r^2} \left\{g(0)\p_tp(0)\ln |\nabla V(y,t)| +  \p_t g(0)\right\}\,dydt \\
\bar B_3 =& \frac{N^2}{2p^2}g(0)\p_tp(0)^2 \bar a \int_{\R^N_+} |\nabla V(y,t)|^p\frac{|y|^2 t^2 }{r^2}  \,dydt\\
\bar B_4 =& -\frac{N}{p}\bar a \int_{\R^N_+} |\nabla V(y,t)|^p \frac{|y|^2 t^2}{r^2}\left( -\frac{g(0)}{2} \p_{tt}p(0) + \p_tp(0)\p_tg(0) + \p_tp(0)^2g(0) \ln |\nabla V(y,t)| \right) \,dydt \\
& +\sum_{i=1}^{N-1} \frac{Ng(0)}{2p}a^{ii}\p_{ii}p(0)  \int_{\R^N_+} |\nabla V(y,t)|^p r^{-2} \left(y_1^4 -3 y_1^2y_2^2\right) \,dydt \\
 &  + \sum_{i,k=1}^{N-1} \frac{Ng(0)}{2p} \left(a^{ii}\p_{kk}p(0)+ 2a^{ik}\p_{ik}p(0)\right)  \int_{\R^N_+} |\nabla V(y,t)|^p r^{-2} y_1^2 y_2^2 \,dydt
\end{split}
\end{equation*}
where $\bar a = \frac{1}{N-1}\sum_{i=1}^{N-1} a^{ii}$ and $r=r(y,t) = \sqrt{(1+t)^2 + |y|^2}$.
\end{lema}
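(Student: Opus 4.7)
My plan is to reduce the integrand to a single scalar profile using the algebraic identity
\[
|\nabla V|^{q-2}\p_i V\,\p_j V \;=\; |\nabla V|^{q}\,\frac{y_iy_j}{r^2}, \qquad q\in\R,
\]
which is immediate from the explicit form $V=r^{-(N-p)/(p-1)}$, $|\nabla V|=\tfrac{N-p}{p-1}r^{-(N-1)/(p-1)}$, and $\p_iV=-\tfrac{N-p}{p-1}r^{-(N-p)/(p-1)-2}y_i$, and which holds verbatim for $V_\eps$ with $r$ replaced by $R(y,t)=\sqrt{(\eps+t)^2+|y|^2}$. Applying this identity the left-hand side of \eqref{Gradient1} rewrites as
\[
\int_{\R^N_+} g(y,t)\,\eta(y,t)\,|\nabla V_\eps|^{p(y,t)}\,R^{-2}\!\sum_{i,j=1}^{N-1}a^{ij}\,y_iy_j\,dy\,dt,
\]
so that the variable exponent enters only through the scalar factor $|\nabla V_\eps|^{p(y,t)}$. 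I then perform the rescaling $y=\eps\tilde y$, $t=\eps\tilde t$, which sends $R$ to $\eps\,r(\tilde y,\tilde t)$ and $V_\eps$ to a scalar multiple of $V(\tilde y,\tilde t)$. After collecting the $\eps$-powers with the Jacobian $\eps^N$, the integral becomes
\[
\int_{\R^N_+} g(\eps\tilde y,\eps\tilde t)\,\eta(\eps\tilde y,\eps\tilde t)\,|\nabla V(\tilde y,\tilde t)|^{p}\,e^{\Delta p\cdot A}\,r^{-2}\!\sum_{i,j}a^{ij}\,\tilde y_i\tilde y_j\,d\tilde y\,d\tilde t,
\]
where $\Delta p=p(\eps\tilde y,\eps\tilde t)-p$ and $A=-\tfrac{N}{p}\ln\eps+\ln|\nabla V(\tilde y,\tilde t)|$. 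The fact that $A$ mixes $\ln\eps$ with $\ln|\nabla V|$ is the mechanism producing every logarithmic coefficient in the expansion.

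The heart of the proof is then a joint Taylor expansion of $g$ and $\Delta p$ combined with the power series $e^{\Delta p\cdot A}=1+\Delta p\,A+\tfrac12(\Delta p)^2A^2+\cdots$. The local-minimum hypothesis on $p$ kills the tangential first derivatives, so
\[
\Delta p=\eps\,\p_tp(0)\,\tilde t+\tfrac{\eps^2}{2}\bigl(\p_{tt}p(0)\tilde t^2+2\p_{y_it}p(0)\tilde y_i\tilde t+\p_{y_iy_j}p(0)\tilde y_i\tilde y_j\bigr)+O(\eps^3).
\]
Collecting terms of bidegree $\eps^\alpha(\ln\eps)^\beta$ identifies the five coefficients: $\bar B_0$ comes from $\Delta p=0$; $\bar B_1$ from the unique $\eps\ln\eps$-piece of $\Delta p\cdot A$, namely $-\tfrac{N}{p}\p_tp(0)\,\tilde t\,\eps\ln\eps$; $\bar B_2$ from $\Delta p\cdot\ln|\nabla V|$ at order $\eps$ together with the linear Taylor term $\p_tg(0)\tilde t$ of $g$ (the tangential $\p_{y_i}g(0)\tilde y_i$ term drops by parity in $\tilde y$); $\bar B_3$ from the pure $(\ln\eps)^2$-part of $(\Delta p)^2A^2/2$; and $\bar B_4$ from three sources at order $\eps^2\ln\eps$ — the $(\Delta p)_{\eps^2}\!\cdot A_{\ln\eps}$ cross term, the $\ln\eps\cdot\ln|\nabla V|$ part of $(\Delta p)^2A^2/2$, and the $\eps$-Taylor correction of $g$ coupled to $(\Delta p\cdot A)_{\eps\ln\eps}$. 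Rotational symmetry of $|\nabla V|$ in $\tilde y$ converts $\sum a^{ij}\tilde y_i\tilde y_j$ to $\bar a\,|\tilde y|^2$ via $\int_{S^{N-2}}\omega_i\omega_j\,d\sigma=\delta^{ij}/(N-1)$, and the fourth-moment identity $\int_{S^{N-2}}\omega_1^4\,d\sigma=3\int_{S^{N-2}}\omega_1^2\omega_2^2\,d\sigma$ produces precisely the $y_1^4-3y_1^2y_2^2$ and $y_1^2y_2^2$ combinations appearing in $\bar B_4$.

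The main obstacle will be controlling the $O(\eps^2)$ remainder uniformly and checking integrability of every integral that arises. Since $|\nabla V|^p$ decays like $r^{-p(N-1)/(p-1)}$, a monomial moment $\int|\nabla V|^p\tilde y^\alpha\tilde t^\beta\tilde r^{-2}$ converges when $p(N-1)/(p-1)-|\alpha|-\beta+2>N$. The worst case is in $\bar B_4$, where the fourth-degree moments $\int|\nabla V|^p\tilde y_i^2\tilde y_j^2/\tilde r^2$ must be finite; a short computation shows this is equivalent to $p<N^2/(3N-2)$, which is precisely the hypothesis. Once integrability is secured, the $O(\eps^2)$ remainder follows from a dominated-convergence argument on the rescaled integrand, with the logarithmic factors $\ln|\nabla V|$ generated by expanding $|\nabla V|^{\Delta p}$ absorbed by slightly weakening the decay of the integrable dominant. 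Modulo this uniform control, assembling the pieces into the claimed $\bar B_0,\dots,\bar B_4$ is a long but mechanical computation.
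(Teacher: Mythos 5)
Your proposal follows essentially the same route as the paper: you rewrite $|\nabla V_\eps|^{p(y,t)-2}\p_iV_\eps\,\p_jV_\eps$ as $|\nabla V_\eps|^{p(y,t)}\,y_iy_j/R^2$, rescale $(y,t)\mapsto(\eps y,\eps t)$ so that, after absorbing the Jacobian, the variable exponent enters as $|\nabla V|^p\,e^{\Delta p\cdot A}$ with $A=-\tfrac{N}{p}\ln\eps+\ln|\nabla V|$, then Taylor-expand $g$ and $\Delta p$ jointly with the exponential, use the local-minimum hypothesis and parity in $y$ to discard the odd terms, and finally apply the spherical second- and fourth-moment identities to obtain the stated $\bar a$, $y_1^4-3y_1^2y_2^2$, and $y_1^2y_2^2$ combinations. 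This is exactly the computation carried out in the paper (the paper writes the factor as $\eps^{N(1-p(\eps y,\eps t)/p)}|\nabla V|^{p(\eps y,\eps t)}$ rather than introducing $e^{\Delta p\cdot A}$ explicitly, but it is the same expansion).

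One small inaccuracy worth correcting: you attribute the hypothesis $p<N^2/(3N-2)$ to the finiteness of the fourth moments $\int|\nabla V|^p\,|y|^4/r^2\,dydt$, calling these conditions equivalent. They are not: those moments are finite under the strictly weaker condition $p<(N+2)/3$. The stronger bound $p<N^2/(3N-2)$ is equivalent to $N/p_*\ge 2$, and in the paper it is used to show that the exterior region $B^+_{2\delta}\setminus B^+_{C\eps^{(N-p)/(p(N-1))}}$ contributes only $O(\eps^2)$, and for the analogous tail estimates $|\bar B_i-\bar B_i(\eps)|$ comparing the truncated integrals to the full ones over $\R^N_+$. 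Since $N^2/(3N-2)<(N+2)/3$ for $N\ge 2$, your plan still closes, but the accounting of where the hypothesis enters is reversed and should be fixed.
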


\begin{proof}
Notice that
$$
|\nabla V_\eps(y,t)|= \frac{N-p}{p-1} \eps^{\frac{N-p}{p(p-1)}}((\eps+t)^2+|y|^2)^{-\frac{N-1}{2(p-1)}}.
$$
So, $|\nabla V_\eps(y,t)|<1$ if $|(y,t)|>C\eps^\frac{N-p}{p(N-1)}$ where $C=\left(\frac{N-p}{p-1}\right)^\frac{p-1}{N-1}$, and $\nabla =(\nabla_y,\p_t)$. Moreover, since $p^-_{2\delta}=p:=p(0,0)$,
\begin{equation*}
\begin{split}
 & \int_{B^+_{2\delta}\backslash B_{C\eps^\frac{N-p}{p(N-1)}}} |\nabla V_\eps|^{p(y,t)-2} |\nabla_y V_\eps|^2 \,dydt  \le \int_{B^+_{2\delta}\backslash B_{C\eps^\frac{N-p}{p(N-1)}}} |\nabla V_\eps|^{p(y,t)} \,dydt \\
 & \le \int_{B^+_{2\delta}\backslash B_{C\eps^\frac{N-p}{p(N-1)}}} |\nabla V_\eps|^p \,dydt
  \le C \eps^{\frac{N-p}{p-1}}\int_{\R^N_+\backslash B_{C\eps^\frac{N-p}{p(N-1)}}} \left\{ (\eps+t)^2+|y|^2\right\}^{-\frac{p(N-1)}{2(p-1)}} \,dydt \\
 & \le C \eps^{\frac{N-p}{p-1}}\int_{\R^N\backslash B_{C\eps^\frac{N-p}{p(N-1)}}} |(y,t)|^{-\frac{p(N-1)}{p-1}} \,dydt
   \le C\eps^{\frac{N-p}{p-1}} \int_{C\eps^\frac{N-p}{p(N-1)}}^{+\infty} \rho^{N-1-\frac{p(N-1)}{p-1}}\,d\rho
\end{split}
\end{equation*}
Then, we obtain
$$
\int_{B^+_{2\delta}\backslash B_{C\eps^\frac{N-p}{p(N-1)}}} |\nabla V_\eps|^{p(y,t)-2}|\nabla_y V_\eps|^2 \,dydt
  \le C \eps^\frac{N}{p_*}.
$$
Since $p\le \frac{N^2}{3N-2}$, we get that $\tfrac{N}{p_*}\ge 2$, hence
$$
\int_{B^+_{2\delta}\setminus B_{C\eps^\frac{N-p}{p(N-1)}}} |\nabla V_\eps|^{p(x,t)-2}|\nabla_y V_\eps|^2 \,dydt = O(\eps^2).
$$
Hence
\begin{equation*}
\begin{split}
 & a^{ij}\int_{\R^N_+} g(y,t)\eta(y,t)|\nabla V_\eps|^{p(y,t)-2} \p_i V_\eps(y,t) \p_j V_\eps(y,t)\,dydt \\
 & = a^{ij} \int_{B^+_{C\eps^\frac{N-p}{p(N-1)} }} g(y,t) |\nabla V_\eps|^{p(y,t)-2}\p_iV_\eps(y,t)\p_jV_\eps(y,t)\,dydt
     + O(\eps^2) \\
 & = a^{ij}\int_{B^+_{C\eps^{-\frac{N(p-1)}{p(N-1)}}}} g(\eps y,\eps t) \eps^{N(1-\frac{p(\eps y,\eps t)}{p})} |\nabla V|^{p(\eps y,\eps t)-2}\p_iV \p_jV\,dydt + O(\eps^2).
\end{split}
\end{equation*}
Letting
$$
\phi_{ij}=|\nabla V|^{p-2} \p_iV \p_jV = |\nabla V(y,t)|^p \frac{y_i y_j}{r^2},\quad \nabla=(\nabla_y,\p_t),
$$
we obtain
\begin{equation*}
\begin{split}
\sum_{i,j=1}^{N-1} a^{ij}\int_{\R^n_+}& g(y,t)\eta(y,t)|\nabla V_\eps|^{p(y,t)-2} \p_iV_\eps \p_jV_\eps\,dydt \\
 & = \bar B_0(\eps) + \bar B_1(\eps)\eps\ln\eps + \bar B_2(\eps) \eps + \bar B_3(\eps) (\eps\ln\eps)^2 + \bar B_4(\eps) \eps^2\ln\eps + \eps^2 R(\eps)
\end{split}
\end{equation*}
 with coefficients $\bar B_i(\eps)$, $i=0,\dots,4$, defined as
\begin{align*}
\bar B_0 & = \sum_{i,j=1}^{N-1}a^{ij}g(0)\int_{\R^N_+} \phi_{ij}(y,t) \,dydt  \\
\bar B_1 & = -\frac{N}{p}g(0)\p_tp(0)\sum_{i,j=1}^{N-1}a^{ij}\int_{\R^N_+} t\phi_{ij}(y,t)\,dydt \\
\bar B_2 & = \sum_{i,j=1}^{N-2} a^{ij} \int_{\R^N_+}  \phi_{ij}(y,t)\big(g(0)t\p_tp(0)\ln |\nabla V| + \nabla g(0)(y,t)\big)\,dydt \\
\bar B_3 & = \frac{N^2}{2p^2}g(0)\p_tp(0)^2 \sum_{i,j=1}^{N-1}a^{ij}\int_{\R^N_+} t^2\phi_{ij}(y,t)\,dydt \\
\bar B_4 & = -\frac{N}{p}\sum_{i,j=1}^{N-1}a^{ij} \int_{\R^N_+} \phi_{ij}(x,t) \big(\frac{g(0)}{2}(D^2p(0)(y,t),(y,t)) + \p_tp(0)t (\nabla g(0),(y,t)) \\
       &\hskip5cm + \p_tp(0)^2g(0) t^2 \ln |\nabla V|\big) \,dydt,
\end{align*}
but with integral over $B^+_{C\eps^{\frac{N-p}{p(N-1)}-1}}$ instead of $\R^N_+$, and the error term $R(\eps)$ satisfies
\begin{equation*}
\begin{split}
 |R(\eps)| & \le C \int_{B^+_{C\eps^{-\frac{N(p-1)}{p(N-1)}}}} r^2 |\nabla V|^p\ln |\nabla V| (1+r \eps\ln\eps)\,dydt \\
         & \le C\int_{B^+_{C\eps^{-\frac{N(p-1)}{p(N-1)}}}}r^2 |\nabla V|^p\ln |\nabla V| \,dydt.
\end{split}
\end{equation*}
Clearly, this last integral is bounded by
$$
C \int_1^{+\infty} \rho^{1-\frac{N-p}{p-1}} \ln\rho\,d\rho
$$
which is finite since $p<\frac{N+2}{3}$. Moreover
\begin{equation*}
\begin{split}
 |\bar B_0-\bar B_0(\eps)| & \le C \int_{\R^N_+\backslash B^+_{\eps^{-\frac{N(p-1)}{p(N-1)}}}} |\nabla V|^p\,dydt
                   \le C \int_{\eps^{-\frac{N(p-1)}{p(N-1)}}}^\infty r^{-1-\frac{N-p}{p-1}} \,dr
                   \le C \eps^\frac{N(N-p)}{p(N-1)}
                   \le C \eps^2
\end{split}
\end{equation*}
since $p\le \frac{N^2}{3N-2}$. Also for $i=1,2$,
\begin{equation*}
\begin{split}
 |\bar B_i- \bar B_i(\eps)| & \le C \int_{\R^N_+\backslash B^+_{\eps^{-\frac{N(p-1)}{p(N-1)}}}} |(y,t)| (1+\ln\,|\nabla V|)|\nabla V|^p\,dydt \\
                  & \le C \int_{\eps^\frac{N(1-p)}{p(N-1)}}^\infty r^{1-\frac{N-p}{p-1}} \ln r \, dr   \\
                  & \le C \int_{\eps^\frac{N(1-p)}{p(N-1)}}^\infty r^{1-\frac{N-p}{p-1}+\alpha} \,dr  \quad\text{ for any }\alpha>0  \\
                  & \le C \eps^{\frac{N(N-2p+1)}{p(N-1)}-\beta}  \quad\text{ for any $\beta>0$ and if $p<\frac{N^2+N}{3N-1}$}, \\
                  & = o(\eps).
\end{split}
\end{equation*}
Eventually, for any $i=3,4$,
\begin{equation*}
\begin{split}
 |\bar B_i- \bar B_i(\eps)| & \le C \int_{\R^N_+\backslash B^+_{\eps^{-\frac{N(p-1)}{p(N-1)}}}} |(y,t)|^2 (1+\ln |\nabla V|)|\nabla V|^p\,dydt \\
                  & \le C \int_{\eps^{-\frac{N(p-1)}{p(N-1)}}}^\infty r^{1-\frac{N-p}{p-1}} \ln r\, dr   \\
                  & \le C \int_{\eps^{-\frac{N(p-1)}{p(N-1)}}}^\infty r^{1-\frac{N-p}{p-1}+\alpha} \,dr  \quad\text{ for any }\alpha>0  \\
                  &= o(1),
\end{split}
\end{equation*}
since $p<\frac{n+2}{3}$.

Hence if $p<N^2/(3N-2)$,
\begin{equation*}
\begin{split}
\sum_{i,j=1}^{N-1} a^{ij}\int_{\R^N_+} & g(y,t)\eta(y,t)|\nabla V_\eps|^{p(y,t)-2}\p_iV_\eps(y,t)\p_jU_\eps(y,t)\,dydt \\
 & = \bar B_0 + \bar B_1\eps\ln\eps + \bar B_2 \eps + \bar B_3( (\eps\ln\eps)^2 + \bar B_4 \eps^2\ln\eps + O(\eps^2).
\end{split}
\end{equation*}

Finally, using the radial symmetry in the $y$ variable, we can simplify the expressions for the $\bar B_i$'s.

For $\bar B_4$, notice that
\begin{equation*}
\begin{split}
\sum_{i,j=1}^{N-1} & a^{ij}\p_{kl}p(0) \int_{\R^N_+} |\nabla V|^p r^{-2} y_i y_j y^k y^l \,dydt \\
 =& \sum_{i=1}^{N-1} a^{ii}\p_{ii}p(0)  \int_{\R^N_+} |\nabla V|^p r^{-2} y_1^4 \,dydt
  + \left(\sum_{i\neq k}a^{ii}\p_{kk}p(0)+ 2a^{ik}\p_{ik}p(0)\right)  \int_{\R^N_+} |\nabla V|^pr^{-2} y_1^2 y_2^2 \,dydt \\
 =& \sum_{i=1}^{N-1} a^{ii}\p_{ii}p(0)  \int_{\R^N_+} |\nabla V|^pr^{-2} \left(y_1^4 -3 y_1^2 y_2^2\right) \,dydt \\
 &  + \sum_{i,k=1}^{N-1}\left(a^{ii}\p_{kk}p(0)+ 2a^{ik}\p_{ik}p(0)\right)  \int_{\R^N_+} |\nabla V|^pr^{-2} y_1^2 y_2^2 \,dydt
\end{split}
\end{equation*}

The other simplifications follow in the same manner.
\end{proof}

\begin{lema} \label{propC}Assume $p<N^2/(3N-2)$. There holds that
$$ \int_{\R^N_+} f(y,t)\eta(y,t)|\nabla V_\eps|^{p(y,t)}\,dydt
  = \bar C_0 + \bar C_1\eps\ln\eps + \bar C_2\eps + \bar C_3(\eps\ln\eps)^2 + \bar C_4\eps^2\ln\eps + O(\eps^2)
$$
with
\begin{equation*}
\begin{split}
\bar  C_0 =& f(0)\int_{\R^N_+} |\nabla V|^p \,dydt, \quad
\bar  C_1  = -\frac{N}{p}f(0)\p_tp(0) \int_{\R^N_+} t|\nabla V|^p\,dydt \\
\bar  C_2 =& \int_{\R^N_+} t|\nabla V|^p \left(f(0)\p_tp(0)\ln |\nabla V| + \p_tf(0)\right)\,dydt \\
\bar  C_3 =& \frac{N^2}{2p^2}f(0)\p_tp(0)^2 \int_{\R^N_+} t^2|\nabla V|^p\,dydt \\
\bar  C_4 =& -\frac{N}{p} \int_{\R^N_+} t^2|\nabla V|^p \left( \frac{f(0)}{2}\p_{tt}p(0)
       + \p_tp(0)\p_tf(0) + \p_tp(0)^2f(0)\ln |\nabla V| \right) \,dydt \\
      & - \frac{N}{2(N-1)p}f(0)\Delta_yp(0) \int_{\R^N_+} |y|^2|\nabla V|^p\,dydt,
      \quad \Delta_y=\sum_{i=1}^{n-1}\p_{ii}
\end{split}
\end{equation*}
\end{lema}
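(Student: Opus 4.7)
The plan is to mimic the argument of Lemma \ref{propB}, but with the scalar integrand $|\nabla V_\eps|^{p(y,t)}$ in place of the bilinear integrand $a^{ij}|\nabla V_\eps|^{p(y,t)-2}\p_iV_\eps\p_jV_\eps$. Equivalently, one can reduce directly to Lemma \ref{propB} by splitting
\begin{equation*}
|\nabla V_\eps|^{p(y,t)} = \sum_{i=1}^{N-1}|\nabla V_\eps|^{p(y,t)-2}(\p_iV_\eps)^2 + |\nabla V_\eps|^{p(y,t)-2}(\p_tV_\eps)^2;
\end{equation*}
the $y$-derivative part is exactly Lemma \ref{propB} with $a^{ij}=\delta_{ij}$ and $g=f$, and the $t$-derivative part is handled by a strictly parallel computation. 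I will describe the self-contained version.

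First, localize: since $|\nabla V_\eps(y,t)|<1$ on $\R^N_+\setminus B^+_{C\eps^{(N-p)/(p(N-1))}}$ and $p(y,t)\ge p=p(0)$ on the support of $\eta$, the same tail estimate used in Lemma \ref{propB} shows that the contribution of this complement is $O(\eps^{N/p_*})=O(\eps^2)$ thanks to the hypothesis $p<N^2/(3N-2)$.

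Next, change variables $(y,t)=\eps(\tilde y,\tilde t)$, which transforms the localized integral into
\begin{equation*}
\int_{B^+_{C\eps^{-N(p-1)/(p(N-1))}}} f(\eps\tilde y,\eps\tilde t)\,\eps^{N(1-p(\eps\tilde y,\eps\tilde t)/p)}|\nabla V(\tilde y,\tilde t)|^{p(\eps\tilde y,\eps\tilde t)}\,d\tilde y\,d\tilde t.
\end{equation*}
Then I would Taylor expand $f(\eps\cdot)$ to first order, $p(\eps\cdot)$ to second order, the factor $\eps^{N(1-p(\eps\cdot)/p)}$ using $\eps^s = 1 + s\ln\eps + \tfrac12 s^2(\ln\eps)^2+\cdots$, and $|\nabla V|^{p(\eps\cdot)-p}$ using $a^s = 1 + s\ln a +\cdots$. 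Multiplying these expansions out and collecting terms by scale $\eps^a(\ln\eps)^b$ produces the candidate coefficients $\bar C_0,\ldots,\bar C_4$. The radial symmetry of $V$ in $y$ kills the first-order $y_i$-contributions and all off-diagonal Hessian terms, and the identity $\int|\nabla V|^p y_i^2\,dy\,dt=\frac{1}{N-1}\int|\nabla V|^p|y|^2\,dy\,dt$ reorganizes $\sum_{i=1}^{N-1}\p_{ii}p(0)\int|\nabla V|^p y_i^2$ into $\frac{1}{N-1}\Delta_yp(0)\int|\nabla V|^p|y|^2$, producing the last term of $\bar C_4$.

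Finally, one controls the error made when enlarging the domain of integration back to all of $\R^N_+$: for each $k\le 2$ the tail $\int_{\R^N_+\setminus B^+_{C\eps^{-N(p-1)/(p(N-1))}}} r^k|\nabla V|^p(1+|\ln|\nabla V||)\,dy\,dt$ is shown to be $o(\eps)$ or $O(\eps^2)$ exactly as in Lemma \ref{propB}, using $p<N^2/(3N-2)$ (and the weaker consequences $p<(N+2)/3$ and $p<(N^2+N)/(3N-1)$ which it implies). The main obstacle is precisely this bookkeeping: there are many cross-terms arising from the four Taylor expansions, and one must verify that every term whose nominal order exceeds $\eps^2\ln\eps$ is absorbed in the $O(\eps^2)$ remainder. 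Since the skeleton of the argument is identical to that of Lemma \ref{propB}, the explicit form of $\bar C_0,\ldots,\bar C_4$ is obtained by replacing the weight $y_iy_j/r^2$ in the $\bar B_i$-formulas by $1$ (and $g$ by $f$), after which radial simplification yields the stated expressions.
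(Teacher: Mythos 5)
Your proposal is correct and follows essentially the same route as the paper: localize to a ball of radius $C\eps^{(N-p)/(p(N-1))}$ using the tail bound $O(\eps^{N/p_*})=O(\eps^2)$ guaranteed by $p<N^2/(3N-2)$, rescale, Taylor expand $f$, $p$, the power $\eps^{N(1-p/p(0))}$ and $|\nabla V|^{p-p(0)}$, collect by orders in $\eps$ and $\ln\eps$, estimate the difference between the truncated and full-space coefficients as for the $\bar B_i$'s, and finish with the radial-symmetry simplification $\int|\nabla V|^p y_i^2=\tfrac{1}{N-1}\int|\nabla V|^p|y|^2$. The paper's own proof is exactly this: a one-line pointer back to Lemma \ref{propB} with the weight $\phi_{ij}=|\nabla V|^p y_iy_j/r^2$ replaced by $|\nabla V|^p$. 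Your alternative reduction via the pointwise identity $|\nabla V_\eps|^{p}=\sum_i|\nabla V_\eps|^{p-2}(\p_iV_\eps)^2+|\nabla V_\eps|^{p-2}(\p_tV_\eps)^2$ is a harmless repackaging (the $t$-piece carries the weight $(1+t)^2/r^2$ and needs the same parallel computation anyway), but it does correctly recombine to weight $1$ since $|y|^2+(1+t)^2=r^2$.
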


\begin{proof}
As before
$$
\int_{\R^N_+ \backslash B_{C\eps^\frac{N-p}{p(N-1)}}  } |\nabla V_\eps|^{p(y,t)}\,dydt
 \le  C \eps^\frac{N}{p_*} = O(\eps^2).
$$
so that
\begin{equation*}
\begin{split}
 & \int_{\R^N_+} f(y,t)\eta(y,t)|\nabla V_\eps|^{p(y,t)}\,dydt
  = \int_{B^+_{C\eps^\frac{N-p}{p(N-1)}}  } f(y,t)|\nabla V_\eps|^{p(y,t)}\,dydt + O(\eps^2)  \\
 & = \bar C_0(\eps) + \bar C_1(\eps)\eps\ln\eps + \bar C_2(\eps)\eps + \bar C_3(\eps)(\eps\ln\eps)^2 + \bar C_4(\eps)\eps^2\ln\eps + O(\eps^2)
\end{split}
\end{equation*}
where the constants $\bar C_i(\eps)$ are the same as
\begin{equation*}
\begin{split}
\bar C_0 & = f(0)\int_{\R^N_+} |\nabla V|^p \,dydt  \\
\bar C_1 & = -\frac{N}{p}f(0)\p_tp(0) \int_{\R^N_+} t|\nabla V|^p\,dydt \\
\bar C_2 & = \int_{\R^N_+} t|\nabla V|^p \left(f(0)\p_tp(0)\ln |\nabla V| + \p_tf(0)\right)\,dydt \\
\bar C_3 & = \frac{N^2}{2p^2}f(0)\p_tp(0)^2 \int_{\R^N_+} t^2|\nabla V|^p\,dydt \\
\bar C_4 & = -\frac{N}{p} \int_{\R^N_+} |\nabla V|^p \left( \frac{f(0)}{2}(D^2p(0)(y,t),(y,t))
       + \p_tp(0)\p_tf(0) t^2  + \p_tp(0)^2f(0) t^2 \ln |\nabla V| \right) \,dydt
\end{split}
\end{equation*}
but with integral over $B^+_{C\eps^{-\frac{N(p-1)}{p(N-1)}}}$ instead of $\R^N_+$. We can estimate $|\bar C_i(\eps)-\bar C_i|$ as we estimated $|\bar B_i(\eps)-\bar B_i|$ in the previous lemma.

Again, using the radial symmetry of $V$ we can simplify the constants $\bar C_i$ as in the previous lemma.
\end{proof}

With the aid of the previous Lemmas, we can now prove Proposition \ref{propD.A}.
\begin{proof}[Proof of Proposition \ref{propD.A}]
First, by Lemma \ref{fermi.asymptotic},
\begin{equation*}
\begin{split}
 \int_{\Omega} f(x)|\nabla v_\eps|^{p(x)}\,dx
  = \int_{\R^N_+} f(y,t)|\nabla v_\eps|^{p(y,t)} (1-Ht+O(t^2 + |y|^2))\,dydt,
\end{split}
\end{equation*}
where we denote $f(y,t) = f(\Phi(y,t))$ and $p(y,t)=p(\Phi(y,t))$.

Recall that, by Lemma \ref{fermi.asymptotic},
$$
|\nabla v_\eps|^2 = (\p_t v_\eps)^2+\sum_{i,j=1}^{N-1}\left(\delta^{ij}+2h^{ij}t+O(t^2+|y|^2)\right)\p_iv_\eps \p_jv_\eps, \quad \p_i=\frac{\p}{\p y_i}.
$$
Then
\begin{equation*}
\begin{split}
 & \int_{\R^N_+} f(y,t)|\nabla v_\eps|^{p(y,t)} (1-Ht+O(t^2+|y|^2))\,dydt \\
 & = \int_{\R^N_+} f(y,t)|\nabla (\eta V_\eps)|^{p(y,t)} (1-Ht+O(t^2+|y|^2))\,dydt  \\
 & = \int_{\R^N_+} f(y,t)\eta(y,t)^{p(y,t)} |\nabla V_\eps|^{p(y,t)} (1-Ht+O(t^2+|y|^2))\,dydt + R(\eps),
\end{split}
\end{equation*}
where
\begin{equation*}
\begin{split}
|R(\eps)|&\le C\int_{\R^N_+\setminus B_\delta} |V_\eps|^{p(y,t)}\, dydt\le C\eps^p \int_{\delta/\eps}^\infty r^{-\frac{p(N-p)}{p-1}+N-1}\, dr = O(\eps^2),
\end{split}
\end{equation*}
if $p\le (n+2)/3$. Hence
\begin{equation*}
\begin{split}
\int_{\Omega} f(x)|\nabla v_\eps|^{p(x)}\,dx  = \int_{\R^N_+} &f(y,t)\eta(y,t)^{p(y,t)}
   \Big[  (\p_tU_\eps)^2\\
    & + \sum_{i,j=1}^{N-1}(\delta^{ij}+2h^{ij}t+O(t^2+|y|^2))\p_iV_\eps \p_jV_\eps \Big]^\frac{p(y,t)}{2} \\
    &(1-Ht+O(t^2 + |y|^2))\,dydt + O(\eps^2)
\end{split}
\end{equation*}
with
\begin{equation*}
\begin{split}
 & \left[ (\p_t V_\eps)^2+\sum_{i,j=1}^{N-1}\left(\delta^{ij}+2h^{ij}t+O(t^2+|y|^2)\right)\p_iV_\eps \p_jV_\eps \right]^\frac{p(y,t)}{2} \\
 & = |\nabla V_\eps|^{p(y,t)} \left[ 1 + \sum_{i,j=1}^{N-1}p(y,t)t h^{ij} |\nabla V_\eps|^{-2} \p_iV_\eps \p_jV_\eps + O(t^2+|y|^2) \right] \\
 & = |\nabla V_\eps|^{p(y,t)} + p(y,t)t h^{ij}|\nabla V_\eps|^{p(y,t)-2} \p_iV_\eps \p_jV_\eps
    + |\nabla V_\eps|^{p(y,t)} O(t^2+|y|^2)
\end{split}
\end{equation*}
Then
\begin{equation*}
\begin{split}
 \int_{\Omega} f(x)|\nabla v_\eps|^{p(x)}\,dx
 =& \int_{\R^N_+} f(y,t)\eta(y,t)^{p(y,t)} |\nabla V_\eps|^{p(y,t)} \,dydt \\
 & +\sum_{i,j=1}^{N-1} h^{ij}\int_{\R^N_+} tf(y,t)p(y,t)\eta(y,t)^{p(y,t)} |\nabla V_\eps|^{p(y,t)-2} \p_iV_\eps \p_jV_\eps  \,dydt  \\
 & - H \int_{\R^N_+} tf(y,t)\eta(y,t)^{p(y,t)}  |\nabla V_\eps|^{p(y,t)} \,dydt \\
 & + O(\eps^2)
\end{split}
\end{equation*}
since
\begin{equation*}
\begin{split}
\int_{\R^N_+} |\nabla V_\eps|^{p(y,t)} O(t^2+|y|^2)\, dydt & \le C\int_{\R^N_+} |(y,t)|^2 |\nabla V_\eps|^{p(y,t)}  \,dydt\\
& \le C \eps^2 \int_{\R^N_+} |(y,t)|^2 |\nabla V|^{p+ O(\eps)}  \,dydt \\
& = C \eps^2 \int_{\R^N_+} |(y,t)|^2 |\nabla V|^p (1+ O(\eps)\ln |\nabla V|) \,dydt.
\end{split}
\end{equation*}
As before this last integral is finite provided that $p<(N+2)/3$.

The proof now follows applying Lemmas \ref{propB} and \ref{propC}.
\end{proof}

\section*{Acknowledgements}
This work was partially supported by Universidad de Buenos Aires under grant UBACYT 20020100100400 and by CONICET (Argentina) PIP 5478/1438.
A. Silva is a fellow of CONICET.

\bibliographystyle{plain}
\bibliography{biblio}

\def\ocirc#1{\ifmmode\setbox0=\hbox{$#1$}\dimen0=\ht0 \advance\dimen0
  by1pt\rlap{\hbox to\wd0{\hss\raise\dimen0
  \hbox{\hskip.2em$\scriptscriptstyle\circ$}\hss}}#1\else {\accent"17 #1}\fi}
  \def\ocirc#1{\ifmmode\setbox0=\hbox{$#1$}\dimen0=\ht0 \advance\dimen0
  by1pt\rlap{\hbox to\wd0{\hss\raise\dimen0
  \hbox{\hskip.2em$\scriptscriptstyle\circ$}\hss}}#1\else {\accent"17 #1}\fi}
\begin{thebibliography}{10}

\bibitem{Adi}
Adimurthi and S.~L. Yadava.
\newblock Positive solution for {N}eumann problem with critical nonlinearity on
  boundary.
\newblock {\em Comm. Partial Differential Equations}, 16(11):1733--1760, 1991.

\bibitem{Aubin}
Thierry Aubin.
\newblock Probl\`emes isop\'erim\'etriques et espaces de {S}obolev.
\newblock {\em C. R. Acad. Sci. Paris S\'er. A-B}, 280(5):Aii, A279--A281,
  1975.

\bibitem{Aubin2}
Thierry Aubin.
\newblock \'{E}quations diff\'erentielles non lin\'eaires et probl\`eme de
  {Y}amabe concernant la courbure scalaire.
\newblock {\em J. Math. Pures Appl. (9)}, 55(3):269--296, 1976.

\bibitem{Aubin3}
Thierry Aubin.
\newblock {\em Some nonlinear problems in {R}iemannian geometry}.
\newblock Springer Monographs in Mathematics. Springer-Verlag, Berlin, 1998.

\bibitem{Brezis-Lieb}
Ha{\"{\i}}m Br{\'e}zis and Elliott Lieb.
\newblock A relation between pointwise convergence of functions and convergence
  of functionals.
\newblock {\em Proc. Amer. Math. Soc.}, 88(3):486--490, 1983.

\bibitem{BN}
Ha{\"{\i}}m Br{\'e}zis and Louis Nirenberg.
\newblock Positive solutions of nonlinear elliptic equations involving critical
  {S}obolev exponents.
\newblock {\em Comm. Pure Appl. Math.}, 36(4):437--477, 1983.

\bibitem{Hebey}
Fran{\c{c}}oise Demengel and Emmanuel Hebey.
\newblock On some nonlinear equations involving the {$p$}-{L}aplacian with
  critical {S}obolev growth.
\newblock {\em Adv. Differential Equations}, 3(4):533--574, 1998.

\bibitem{libro}
Lars Diening, Petteri Harjulehto, Peter H{\"a}st{\"o}, and Michael
  R{\ocirc{u}}{\v{z}}i{\v{c}}ka.
\newblock {\em Lebesgue and {S}obolev spaces with variable exponents}, volume
  2017 of {\em Lecture Notes in Mathematics}.
\newblock Springer, Heidelberg, 2011.

\bibitem{DHL}
Zindine Djadli, Emmanuel Hebey, and Michel Ledoux.
\newblock Paneitz-type operators and applications.
\newblock {\em Duke Math. J.}, 104(1):129--169, 2000.

\bibitem{D}
Olivier Druet.
\newblock Generalized scalar curvature type equations on compact {R}iemannian
  manifolds.
\newblock {\em Proc. Roy. Soc. Edinburgh Sect. A}, 130(4):767--788, 2000.

\bibitem{DH}
Olivier Druet and Emmanuel Hebey.
\newblock The {$AB$} program in geometric analysis: sharp {S}obolev
  inequalities and related problems.
\newblock {\em Mem. Amer. Math. Soc.}, 160(761):viii+98, 2002.

\bibitem{Escobar}
Jos{\'e}~F. Escobar.
\newblock Conformal deformation of a {R}iemannian metric to a scalar flat
  metric with constant mean curvature on the boundary.
\newblock {\em Ann. of Math. (2)}, 136(1):1--50, 1992.

\bibitem{ER}
Pierpaolo Esposito and Fr{\'e}d{\'e}ric Robert.
\newblock Mountain pass critical points for {P}aneitz-{B}ranson operators.
\newblock {\em Calc. Var. Partial Differential Equations}, 15(4):493--517,
  2002.

\bibitem{Evans2}
Lawrence~C. Evans.
\newblock {\em Weak convergence methods for nonlinear partial differential
  equations}, volume~74 of {\em CBMS Regional Conference Series in
  Mathematics}.
\newblock Published for the Conference Board of the Mathematical Sciences,
  Washington, DC, 1990.

\bibitem{Faget}
Zo{\'e} Faget.
\newblock Best constants in {S}obolev inequalities on {R}iemannian manifolds in
  the presence of symmetries.
\newblock {\em Potential Anal.}, 17(2):105--124, 2002.

\bibitem{Fan}
Xianling Fan and Dun Zhao.
\newblock On the spaces {$L^{p(x)}(\Omega)$} and {$W^{m,p(x)}(\Omega)$}.
\newblock {\em J. Math. Anal. Appl.}, 263(2):424--446, 2001.

\bibitem{FBS}
Juli{\'a}n Fern{\'a}ndez~Bonder and Nicolas Saintier.
\newblock Estimates for the {S}obolev trace constant with critical exponent and
  applications.
\newblock {\em Ann. Mat. Pura Appl. (4)}, 187(4):683--704, 2008.

\bibitem{FBSS3}
Juli{\'a}n Fern{\'a}ndez~Bonder, Nicolas Saintier, and Anal{\'{\i}}a Silva.
\newblock On the sobolev trace theorem for variable exponent spaces in the
  critical range.
\newblock Submitted.

\bibitem{FBSS2}
Juli{\'a}n Fern{\'a}ndez~Bonder, Nicolas Saintier, and Analia Silva.
\newblock Existence of solution to a critical equation with variable exponent.
\newblock {\em Ann. Acad. Sci. Fenn. Math.}, 37:579--594, 2012.

\bibitem{FBSS1}
Juli{\'a}n Fern{\'a}ndez~Bonder, Nicolas Saintier, and Analia Silva.
\newblock On the {S}obolev embedding theorem for variable exponent spaces in
  the critical range.
\newblock {\em J. Differential Equations}, 253(5):1604--1620, 2012.

\bibitem{FBS1}
Juli{\'a}n Fern{\'a}ndez~Bonder and Anal{\'{\i}}a Silva.
\newblock Concentration-compactness principle for variable exponent spaces and
  applications.
\newblock {\em Electron. J. Differential Equations}, pages No. 141, 18, 2010.

\bibitem{Fu}
Yongqiang Fu.
\newblock The principle of concentration compactness in {$L^{p(x)}$} spaces and
  its application.
\newblock {\em Nonlinear Anal.}, 71(5-6):1876--1892, 2009.

\bibitem{tubes}
Alfred Gray.
\newblock {\em Tubes}.
\newblock Addison-Wesley Publishing Company Advanced Book Program, Redwood
  City, CA, 1990.

\bibitem{Harjuleto}
Petteri Harjulehto, Peter H{\"a}st{\"o}, Mika Koskenoja, and Susanna Varonen.
\newblock The {D}irichlet energy integral and variable exponent {S}obolev
  spaces with zero boundary values.
\newblock {\em Potential Anal.}, 25(3):205--222, 2006.

\bibitem{HV}
Emmanuel Hebey and Michel Vaugon.
\newblock Existence and multiplicity of nodal solutions for nonlinear elliptic
  equations with critical {S}obolev growth.
\newblock {\em J. Funct. Anal.}, 119(2):298--318, 1994.

\bibitem{KR}
Ondrej Kov{\'a}{\v{c}}ik and Ji{\v{r}}{\'{\i}} R{\'a}kosn{\'{\i}}k.
\newblock On spaces {$L^{p(x)}$} and {$W^{k,p(x)}$}.
\newblock {\em Czechoslovak Math. J.}, 41(116)(4):592--618, 1991.

\bibitem{MOSS}
Yoshihiro Mizuta, Takao Ohno, Tetsu Shimomura, and Naoki Shioji.
\newblock Compact embeddings for {S}obolev spaces of variable exponents and
  existence of solutions for nonlinear elliptic problems involving the
  {$p(x)$}-{L}aplacian and its critical exponent.
\newblock {\em Ann. Acad. Sci. Fenn. Math.}, 35(1):115--130, 2010.

\bibitem{Nazaret}
Bruno Nazaret.
\newblock Best constant in {S}obolev trace inequalities on the half-space.
\newblock {\em Nonlinear Anal.}, 65(10):1977--1985, 2006.

\bibitem{Saintier}
Nicolas Saintier.
\newblock Asymptotic estimates and blow-up theory for critical equations
  involving the {$p$}-{L}aplacian.
\newblock {\em Calc. Var. Partial Differential Equations}, 25(3):299--331,
  2006.

\bibitem{Saintier2}
Nicolas Saintier.
\newblock Estimates of the best {S}obolev constant of the embedding of
  {$BV(\Omega)$} into {$L^1(\partial\Omega)$} and related shape optimization
  problems.
\newblock {\em Nonlinear Anal.}, 69(8):2479--2491, 2008.

\bibitem{Saintier3}
Nicolas Saintier.
\newblock Best constant in critical {S}obolev inequalities of second-order in
  the presence of symmetries.
\newblock {\em Nonlinear Anal.}, 72(2):689--703, 2010.

\bibitem{Schoen}
Richard Schoen.
\newblock Conformal deformation of a {R}iemannian metric to constant scalar
  curvature.
\newblock {\em J. Differential Geom.}, 20(2):479--495, 1984.

\end{thebibliography}

\end{document}